\newtheorem{thm}{Theorem}[section]
\newtheorem{lem}[thm]{Lemma}
\newtheorem{prop}[thm]{Proposition}
\newtheorem*{note}{Note}
\newtheorem{pretheorema}{{\bf Theorem}}
\def\Cchi{{\raisebox{.2ex}{\large $\chi$}}}
\title{Determination  of the size of  defining set for Steiner triple systems}
\begin{document}
\author[Nazli Besharati]{Nazli Besharati$^{*}$}
\address{Department of Mathematical Sciences,  Payame Noor University,
P.O. Box 19395-3697,   Tehran, I. R. Iran}
\email{nbesharati@pnu.ac.ir\\ {$^*$}Corresponding author}
\author{M. Mortezaeefar}
\address{Department of Mathematical Sciences, Sharif University of
Technology, P. O. Box 11155-9415, Tehran, I. R. Iran}
\email{ m.mortezaeefar@gmail.com}
\subjclass{05C65, 05B05,  05C15.}
\keywords{Hypergraph,  Steiner triple systems,  Coloring,  Defining set.}
\maketitle
\begin{abstract}
Every  Steiner triple system  is a uniform hypergraph.
The coloring of hypergraph and its special case Steiner triple systems, {STS}$(v)$, is studied extensively.  But the defining set of the coloring of  hypergraph even its special case {STS}$(v)$, is not explored yet.
We study minimum defining set and  the  largest minimal defining set
for $3$-coloring of  {STS}$(v)$.  We   determined
minimum defining set and  the  largest minimal defining set,
for  all  non-isomorphic {STS}$(v)$,  $v\le 15$.
 Also we have found the {\sf defining number} for all Steiner triple systems of order $v$,  and   some lower bounds for the  size of the  largest minimal defining set  for all Steiner triple systems of order $v$,
  for each admissible $v$.
\end{abstract}
%
\section{Introduction}\label{Sec:Introduction}
We follow standard notations and concepts from graph theory and design theory. For
these, one may refer to  \cite{MR2368647}, and ~\cite{MR1871828}.

A {\sf $2$-$(v,k,\lambda)$ design} or a  {\sf block design}, for short,  is a pair $(V, \mathcal{B})$
where $V$ is a $v$-set and
$\mathcal{B}$ is a collection of $b$ \  $k$-subsets
of $V$ (blocks) such that, any $2$-subset of $V$ is contained in exactly $\lambda$ blocks.
These conditions imply that  each element of $V$ is contained in constant number,
$r$ blocks.
A $2$-$(v,3,1)$ design is called a {\sf Steiner triple system}, or {\rm STS}$(v)$.

Let $D=(V,\mathcal{B})$ be a Steiner triple system of order $v$,  {\rm STS}$(v)$. A {\sf  (weak) coloring} of $D$ is a mapping $\phi: V \rightarrow C$, where C is a set of
cardinality $m$ whose elements are called colors, such that
$|\phi(B)| >1$ for each $B\in \mathcal{B}$, i.e. there is no
mono-chromatic triple. We say that $D$ is $m$-colorable.
The chromatic number of $D$, $\Cchi(D)$ is the smallest value of $m$ for
which $D$ admits a coloring with $m$ colors. 
A subset of elements  assigned to the same color is called a color class, i.e.
 for each $ c \in C $,  the set
$\phi^{-1}(c) = \{x\in V:\phi(x) = c \}$ is coloring class of color $c$.

An easy counting argument established that only the unique, trivial  {\rm STS}$(3)$ is $2$-chromatic.
By using the Bose and  the Skolem constructions \cite{MR2469212}, we can construct an {\rm STS}$(v)$  with $3$-chromatic. So  for each $ v \equiv 1 , 3 \pmod{6}$, $v \geq 7$, there is  exitly  a Steiner triple system which is  $3$-chromatic.
%

In a given graph $G$, a set $S$ of vertices with an assignment
of colors is called a {\sf defining set} of the vertex coloring of $G$ if there
exists a unique extension of the colors of $S$ to a  $\Cchi(G)$-coloring
of the vertices of $G$. A defining set with minimum cardinality  is called a {\bf \sf smallest defining} set, and its cardinality is the {\sf defining number}, denoted by $d(G,\Cchi)$.
If the coloring of vertices outside a defining set is forced
at each step, we say that  the defining set is {\sf strong}.
On the other hand,  when in each case we have at least one vertex that it has list coloring set
with only one element  the defining set is {\sf strong}, and when
in some steps  we do not have any vertices with list coloring set only one element, the defining set is  {\sf  weak}.
There are some results on defining set of vertex coloring of graphs, such as 
\cite{MR1492638}, \cite{MR1446764}, \cite{MR1674887}, and \cite{MR2194763}.
The concept of a defining set has been studied, to some extent,
for block designs, and also under another name, a critical set,
for Latin squares. A more general survey of defining sets in combinatorics appears in \cite{MR2011736}.

Erd{\"o}s and Lovasz were apparently the first to consider weak vertex coloring of 
hypergraphs~\cite{MR1178507}.
Later Berge formalized the notion of the weak and strong chromatic number of a 
hypergraph\cite{MR0384579}.
It is clearly that every  Steiner triple system is a uniform hypergraph.
The chromatic  numbers of Steiner  triple  systems  were first studied  by 
Rosa\cite{MR0280390}. 
There are many papers about  the coloring of Steiner  triple  systems, such as
\cite{MR0290993}, \cite{MR1961750} and \cite{MR1953280}. 
But the defining set of the coloring of  hypergraphs  and Steiner triple systems is not explored yet.

Throughout this paper we study $3$-chromatic Steiner triple systems (weak coloring)
 and use a fixed set of colors $C=\{R,G,Y\}$. 
Consider  the color classes size with $c_i$, $i=1,2,3$, 
we describe $(c_1,c_2,c_3)$  as a  {\sf coloring pattern} of $3$-coloring of  {\rm STS}$(v)$, such that 
$c_1 \geq c_2 \geq c_3$.

\begin{lem}
{\rm(\cite{MR1961750})}
\label{}
If  $v > 7$,  then  $c_1 \leq r$.
(The parameter $r=\frac{v-1}{2}$ is the
 number of blocks of a Steiner triple system  in which each point appears.)
\end{lem}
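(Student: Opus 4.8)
\medskip
\noindent\textbf{Sketch of the intended proof.} The plan is to prove the sharp bound in two stages: an elementary counting argument giving the weaker inequality $c_1\le r+1$, and then a rigidity analysis of the extremal case $c_1=r+1$ showing that it cannot occur once $v>7$. All of the difficulty should lie in the second stage.

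For the first stage, let $A$ be a colour class with $|A|=c_1$ and fix a point $x\in A$. The $r$ blocks through $x$ partition $V\setminus\{x\}$ into $r$ pairs, and since $A$ contains no block, each of these pairs contains at most one further point of $A$; hence $|A|-1\le r$, that is, $c_1\le r+1$.

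For the second stage, assume $c_1=r+1$ and repeat the count with $A$ as above. Now $|A\setminus\{x\}|=r$ equals the number of pairs through $x$, so each such pair meets $A$ in \emph{exactly} one point; equivalently, every block through $x$ meets $A$ in exactly two points, and since $x\in A$ was arbitrary, every block of the system meets $A$ in $0$ or $2$ points. Put $W=V\setminus A$; using $v=2r+1$ this gives $|W|=r$, and every block meets $W$ in $1$ or $3$ points. The blocks meeting $A$ in two points are exactly the $\frac{r(r+1)}{2}$ blocks joining pairs of points of $A$, so a short count leaves precisely $\frac{r(r-1)}{6}$ blocks inside $W$; since any two points lie in at most one common block, these blocks cover all $\frac{r(r-1)}{2}$ pairs inside $W$, whence $W$ together with the blocks it contains forms a Steiner triple system of order $r$. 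Finally, $v>7$ forces $r=\frac{v-1}{2}\ge 4$, so either no STS$(r)$ exists (and this conclusion is already absurd) or $r\ge 7$, in which case the two colour classes other than $A$ restrict to a proper $2$-colouring of that STS$(r)$, contradicting the fact noted above that STS$(3)$ is the only $2$-chromatic Steiner triple system. In either case $c_1=r+1$ is impossible, so $c_1\le r$.

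The main obstacle is the heart of the second stage: recognising that $c_1=r+1$ forces every block to meet the largest colour class evenly, and hence that its complement $W$ is a \emph{full} subsystem of the design. Once this is in place, the final contradiction --- via non-existence of STS$(r)$ for small $r$, and via $3$-chromaticity of STS$(r)$ for $r\ge 7$ --- is immediate.
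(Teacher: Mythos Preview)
The paper does not actually contain a proof of this lemma: it is quoted verbatim from \cite{MR1961750} and stated without argument, so there is no ``paper's own proof'' to compare your attempt against.

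That said, your proposed proof is correct and is essentially the classical argument for this bound. The two stages are exactly right: the pigeonhole count through a fixed point of the largest class $A$ gives $c_1\le r+1$, and in the extremal case every block meets $A$ in $0$ or $2$ points, forcing $W=V\setminus A$ to carry a sub-STS of order $r$. One small stylistic remark: rather than the counting detour (``$\tfrac{r(r-1)}{6}$ blocks covering $\tfrac{r(r-1)}{2}$ pairs''), it is slightly cleaner to argue directly that for any $y,z\in W$ the unique block through $\{y,z\}$ already meets $W$ twice and hence meets $A$ in $0$ or $1$ points, so by the $0$--$2$ dichotomy it lies wholly in $W$; this immediately gives the sub-STS without any arithmetic. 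Your concluding dichotomy (no STS$(r)$ exists, or $r\ge 7$ and the residual $2$-colouring contradicts $\chi(\mathrm{STS}(r))\ge 3$) is exactly the standard finish, and it matches the fact the paper records just before the lemma, that only STS$(3)$ is $2$-chromatic.
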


We  introduce the concept of defining set for Steiner triple systems, colorings. 

According to definition of defining set of  vertex coloring of graphs,  
a set of elements with an assignment of colors to them is a
{\sf defining  set} of a Steiner triple system, {\rm STS}$(v)$, if there exists a unique
extension of the colors of this set, to a $3$-coloring of the elements of {\rm STS}$(v)$.
The size of the smallest defining set for an {\rm STS}$(v)$ is called defining number, and 
denoted by $ d({\rm STS}(v),3)$.  The {\sf defining number} for all Steiner triple systems of order $v$ is
\begin{center}
$ d(v,3) = \min \{ d({\rm STS}(v),3)\}.$
\end{center}
The {\sf size of the  largest minimal defining set} for an {\rm STS}$(v)$ is also interesting  and is denoted by ${\mathcal D}({\rm STS}(v),3)$,  and the  size of the  largest minimal defining set  for all Steiner triple systems of order $v$ is
\begin{center}
${\mathcal D}(v,3) = \max \{{\mathcal D}({\rm STS}(v),3)\}$.
\end{center}
We  define the {\sf spectrum} of defining number of all Steiner triple systems of order $v$  as
\begin{center}
$ {\rm spec}(d ({\rm STS}(v), 3))  =\{ n| \ \exists \  {\rm STS}(v), \ d({\rm STS}(v),3)=n\} .$
\end{center}
And  the {\sf spectrum} of the  size of the  largest minimal defining set  for all Steiner triple systems of oder $v$ is
\begin{center}
$ {\rm spec}({\mathcal D} ({\rm STS}(v), 3)) =\{ n| \ \exists \  {\rm STS}(v),  \ {\mathcal D}({\rm STS}(v),3)=n\} .$
\end{center}
In this paper we study  $ d(v,3)$ and ${\mathcal D}(v,3)$ for each admissible $v$.  Also, we  determined
$ {\rm spec}(d ({\rm STS}(v), 3))$ and $ {\rm spec}({\mathcal D} ({\rm STS}(v), 3)) $,  for  $v\le 15$.
\section{Defining  sets  for small orders}
Up to isomorphism, there are a unique {\rm STS}$(7)$, a unique  {\rm STS}$(9)$, two {\rm STS}$(13)$s,
and the eighty  {\rm STS}$(15)$s \cite{MR2246267}.

 We have found $ {\rm spec}(d ({\rm STS}(v), 3)) $ and  $ {\rm spec}({\mathcal D}({\rm  STS}(v), 3))$,
 by computer, for  $ 7 \leq v \le 15$. The results are presented  by the Tables in the appendix.  \\ \\
 $\bullet  \   \  v=7$

Let the unique {${\rm STS}(7)$} be given by $V=\{0,1,\dots, 6\}$ and
the blocks obtained by the mapping $ i\rightarrow (i +1 ) (mod  \ 7)$ on the base block
$B=\{0, 1,3\}$.
The possible coloring patterns are $(c_1, c_2,c_3)=(4,2,1); (3,3,1)$ or$(3,2,2)$ (\cite{MR1961750}).
We determined  the defining set in all coloring patterns, the results  show that
$ d ({\rm STS}(7),3)=d(7,3)=6$, and  ${\mathcal {D}}({\rm STS}(7),3)={\mathcal{D}}(7,3)=6$ 
(See Table \ref{STS7} in appendix). \\ \\
   $  \bullet  \   \  v=9$
   
Let the unique ${\rm STS}(9)$ be given by $V=\{0,1,2,\dots,8\}$, and blocks
${\mathcal{B}}=\{\{0,1,2\}, \{0,3,6\}, \{0,4,8\},\{0,5,7\}, \{3,4,5\}, \{1,4,7\}, \{1,5,6\}, \{1,3,8 \},$ 
 $ \hspace*{1cm} \{6,7,8\} , \{2,5,8\}, \{2,3,7\}, \{2,4,6\} \}.$\\
The possible coloring patterns are $(c_1,c_2,c_3)=(4,4,1); (4,3,2)$ or $(3,3,3)$ (\cite{MR1961750}).
We determined  the defining set in all coloring patterns, the results  show that
$ d ({\rm STS}(9),3) = d(9,3)=7$, and  ${\mathcal {D}}({\rm STS}(9),3)= {\mathcal {D}}(9,3)= 9$
(See Tables \ref{mdefining_sts9} and \ref{Ldefining_sts9} in appendix). \\ \\
   $  \bullet  \   \  v=13$
   
One of the STS(13)s is given by $V =\{0, 1, 2, 3, 4, 5, 6, 7, 8, 9, 10, 11, 12\}$ and  the blocks obtained
by the mapping $ i \rightarrow (i + 1) (mod  \ 13) $ on the two base blocks  $ \{\{0, 1, 4\} , \{0, 2, 7\}\}$.
The second STS(13) is formed by replacing  the blocks $\{0, 1, 4\}, \{0, 2, 7\}, \{2, 4, 9\},
\{7, 9, 1\}$  in the above system by the blocks $\{2, 7, 9\}, \{1, 4, 9\}, \{0, 1, 7\},
\{0, 2, 4\}.$
The possible coloring patterns are $(c_1, c_2, c_3) =(6, 5,2); (6,4,3); (5,5,3)$
or $(5,4,4)$ for both systems (\cite{MR1961750}).
We determined  the defining set  in all coloring patterns for both of them
with computer programming,  and the results  show  that
$ d ({\rm STS}(13),3) = d(13,3)=6$, and ${\mathcal {D}}({\rm STS}(13),3)= {\mathcal {D}}(13,3)= 11$
(See Tables \ref{mdefining  sts13} and  \ref{Ldefining sts13} in appendix). \\ \\
 $\bullet  \   \   v=15$

We know, up to isomorphism, there are  eighty  {\rm STS}$(15)$s \cite{MR2246267}.
The possible coloring patterns are $(c_1, c_2, c_3) = (7, 5, 3); (7, 4, 4); (6, 6, 3); (6, 5, 4)$
or $(5, 5, 5).$
 But system ${\sharp 1}$,  (${\sharp 1}$ in the listing  of \cite{MR2246267}),
  has  only  one  coloring pattern $(c_1, c_2, c_3) = (5; 5; 5)$,
and system ${\sharp 7}$  has  only   two coloring patterns $(c_1, c_2, c_3)= (6; 5; 4) or (5; 5; 5) $,
also systems ${\sharp 79}$ and ${\sharp 80}$   have  three coloring patterns  
$(c_1, c_2, c_3) = (6; 6; 3); (6; 5; 4) or (5; 5; 5).$
 (\cite{MR1961750}).
 We determined  the defining  number in all coloring patterns  for all of them,
the results  showed that 
 \begin{displaymath}
d({\rm STS}_{\sharp l}(15),3)=
\left\{ \begin{array}{lll}
7    &  \hspace{.5cm}  if  \  l=1 \\
5    &  \hspace{.5cm}  if   \  l \in \{40, 56, 68\} \\
6    &   \hspace{.5cm}   otherwise
\end{array}\right.
\end{displaymath}
Then   ${\rm spec(d(STS}(15),3))=\{5,6,7\}$, and   $d(15,3)=5$. \\
Also, the results showed that
${\mathcal {D}}({\rm STS}_{\sharp 1}(15),3)=7$,  ${\mathcal {D}}({\rm STS}_{\sharp 7}(15),3)=10$, and for  $ l \in \{74, 77, 80\} $,  ${\mathcal {D}} ({\rm STS}_{\sharp l}(15),3) =  15$.
 The  size of the  largest minimal defining set for
the rest of $ {\rm STS}(15)$  are 11 or 13. \\
 Hence, ${\rm spec}({\mathcal {D}}({\rm STS}(15),3))= \{ 7, 10, 11, 13, 15\}$, and ${\mathcal {D}}(15, 3)= 15$.   As shown in Tables  \ref{defining sts15 NO1} and \ref{defining sts15 NO2} 
for  minimum defining set  and Tables  \ref{Ldefining sts15 NO1} and  \ref{Ldefining sts15 NO2}   for the  largest minimal defining set  in appendix.\\
A summary of results are  presented in the Table \ref{spec(v, 3)}. 
\begin{table}[ht]
\centering
\caption{Spectrums and defining numbers of ${\rm STS}(v)$, for   $ 7 \leq v \le 15$.}
\begin{tabular}{|c|c|c|c|c|}
\hline
$v$                                                        & $7$       & $9$        &   $13$       &   $15$       \\\hline
$ {\rm spec}(d ({\rm STS}(v), 3)) $          & $\{6\}$ & $\{7\}$   & $\{6\}$    &  $\{5,6,7\}$
\\\hline
$ {\rm spec}({\mathcal D}({\rm  STS}(v), 3))$ & $\{6\}$ &$\{9\}$    & $\{11 \}$ & $\{7, 10, 11, 13, 15\}$
\\ \hline
$ d(v, 3)$                                               & $6$       & $7$        & $6$         &  $5$ \\\hline
$ {\mathcal D}(v, 3)$                                     & $6$       & $9$        & $11$        &  $15$ \\\hline
\end{tabular}
\label{spec(v, 3)}
\end{table}
\section{General Results}
\begin{thm}
Let {\rm STS}$(v)$, $v >3$, be 3-chromatic, and   $S$ be  the strong defining set for
{\rm STS}$(v)$. Then $\forall v \ \ d({\rm STS}(v),3) \geq 6.$
\end{thm}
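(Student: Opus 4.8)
The plan is to prove that every strong defining set $S$ of a $3$-chromatic ${\rm STS}(v)$ with $v>3$ has $|S|\ge 6$, which yields $d({\rm STS}(v),3)\ge 6$. We may assume $|S|\le 5$, since otherwise there is nothing to prove. Since $v>3$ and ${\rm STS}(3)$ is the only $2$-chromatic Steiner triple system, we have $v\ge 7>|S|$, so $V\setminus S\ne\emptyset$ and the forcing process that recovers the unique $3$-colouring from $S$ has a genuine first step, colouring some point $x_1\notin S$. The whole argument rests on one elementary remark about the forcing steps: at any stage a still-uncoloured point $x$ is forced exactly when it is \emph{excluded} from two of the three colours, and $x$ is excluded from a colour $c$ only if there is a \emph{monochromatic pair} $\{p,q\}$ of colour $c$ among the already-coloured points with $\{x,p,q\}$ a block. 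Since every pair of points of an ${\rm STS}$ lies in a unique block, a given monochromatic pair of colour $c$ can exclude from $c$ only the one point that completes it to a block.

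First I would analyse $x_1$. Being forced, say to colour $R$, it is excluded from $G$ and from $Y$, so there are blocks $\{x_1,a,b\}$ and $\{x_1,c,d\}$ with $\phi(a)=\phi(b)=G$ and $\phi(c)=\phi(d)=Y$. As two blocks through $x_1$ meet only in $x_1$, the points $a,b,c,d$ are distinct, and they all lie in $S$ because nothing else is coloured at the first step. Hence $|S|\ge 4$, so $|S|\in\{4,5\}$, and if $|S|=4$ then $S=\{a,b,c,d\}$ with colour pattern $G,G,Y,Y$.

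Next I would obtain a contradiction in both remaining cases by enumerating the monochromatic pairs present just after $x_1$ has been coloured. If $|S|=4$, these are only $\{a,b\}$ (colour $G$) and $\{c,d\}$ (colour $Y$), each lying in the block through $x_1$ that contains it; hence the witnessing block for any exclusion would have to pass through $x_1$, so no uncoloured point can be excluded from any colour, the forcing stalls while uncoloured points remain, and $S$ is not a strong defining set. If $|S|=5$, write $S=\{a,b,c,d,e\}$ and split on $\phi(e)$. If $\phi(e)=R$ (the colour of $x_1$), the monochromatic pairs of colours $G$ and $Y$ are still only $\{a,b\}$ and $\{c,d\}$, both inside blocks through $x_1$, so an uncoloured point can be excluded only from $R$, i.e.\ from at most one colour. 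If $\phi(e)=G$ (and symmetrically if $\phi(e)=Y$), there is no monochromatic $R$-pair at all, and the only monochromatic $Y$-pair is $\{c,d\}$, again inside a block through $x_1$, so an uncoloured point can be excluded only from $G$, once more from at most one colour. In every sub-case no uncoloured point can be forced at the next step, contradicting strongness; hence $|S|\ge 6$ and $d({\rm STS}(v),3)\ge 6$.

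I expect the $|S|=5$ step to be the main obstacle. It requires systematically invoking the Steiner incidence property (two blocks meet in at most one point) to enumerate \emph{all} monochromatic pairs present just after the first forcing step and to verify, in each of the three cases for $\phi(e)$, that no still-uncoloured point lies in two blocks carrying monochromatic pairs of two distinct colours. The bookkeeping is short but has to be exhaustive, and some care is needed not to overlook the monochromatic pairs produced by the fifth point $e$ (which is exactly why the case split on $\phi(e)$ is needed).
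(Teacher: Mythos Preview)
Your argument is correct and follows essentially the same route as the paper: identify the four points $a,b,c,d\in S$ needed to force the first point $x_1$, then argue that after this first step the only monochromatic pairs sit in blocks through $x_1$, so further forcing requires additional points of $S$. Your case split on $\phi(e)$ for $|S|=5$ makes explicit what the paper asserts in one sentence (``we need to have at least two elements in the defining set to be able to force the color of a new element''), so your version is the more careful of the two.
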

\begin{proof}
Suppose we use three colors, red, yellow and green
for coloring of {\rm STS}$(v)$.
To force the  color of an element such  as ${a}$, we must have  a pair of blocks, say $\{a,b,c\}$ and  $\{a,d,e\}$, in which the colors of  elements  $\{b,c,d,e\}$ are given and the color of
$b$ and $c$, are the same, say red, and the color of  $d$ and $e$,
also the same but different from red, say yellow. These will force
the color of $a$ to be green.
Now,  five vertices are colored and four of them, are in the defining set.
Since $\lambda =1$, there is no another block having mono-colored elements from $\{a,b,c,d,e\} $,
but there are blocks with one elements from $\{a,b,c,d,e\} $.
Thus we need to have at least two  elements in the defining set to be able to force the color of a new element.
Therefore, the strong defining set,$S$, has at least six elements.
\end{proof}

 Rosa and Colbourn introduce the concept of a
 uniquely  colorable Steiner triple system\cite{MR1178507}. This is  defined to be
an $m$-chromatic {\rm STS}$(v)$, in which any
$m$-coloring of the system produces the same partition of the set $V$ into $m$ color classes.

\begin{prop}
\label{1}
$\forall v \geq 25,  \ \ d(v,3)=2.$
\end{prop}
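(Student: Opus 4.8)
The plan is to exhibit, for every admissible $v \geq 25$, a single Steiner triple system together with a two-element partial coloring that forces a unique $3$-coloring; this establishes $d(v,3) \leq 2$, and since a one-element (or empty) partial coloring can never force a full $3$-coloring---color classes can be permuted---we get $d(v,3) = 2$. The natural route is to invoke the existence of a uniquely $3$-colorable $\mathrm{STS}(v)$ for all admissible $v \geq 25$ (this is a known result of Rosa and Colbourn, referenced in the paragraph immediately preceding the statement). In a uniquely colorable system the partition of $V$ into the three color classes is fixed; the only freedom in a $3$-coloring is the bijection between the three classes and the three colors $\{R,G,Y\}$. Therefore it suffices to pin down which color goes to which class, and for that we only need to name one element from two different color classes: if $x$ lies in class $P_1$ and $y$ lies in class $P_2$, then setting $\phi(x)=R$, $\phi(y)=G$ forces all of $P_1$ to be red, all of $P_2$ to be green, and the remaining class to be yellow. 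Hence $\{x,y\}$ is a defining set of size $2$.

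The steps, in order, would be: first, recall/cite that for every admissible order $v \geq 25$ there exists a uniquely $3$-colorable $\mathrm{STS}(v)$ (the lower bound $25$ is exactly where the Rosa--Colbourn construction kicks in and why the hypothesis reads $v \geq 25$). Second, observe that in such a system any $3$-coloring induces the same unordered partition $\{P_1,P_2,P_3\}$ of $V$, and conversely every assignment of the three colors to these three blocks of the partition is a valid $3$-coloring (no block of the design is monochromatic, since that property depends only on the partition). Third, pick $x \in P_1$ and $y \in P_2$ with $x \neq y$ (possible as the classes are nonempty and distinct), assign them distinct colors, and note this selects a unique bijection from $\{P_1,P_2,P_3\}$ to $\{R,G,Y\}$, hence a unique $3$-coloring extending the partial one. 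Fourth, argue $d(v,3) \geq 2$: a defining set of size $\leq 1$ leaves at least two colors interchangeable on the uncolored part (indeed on any system with $\geq 2$ color classes the two unused or under-specified colors can be swapped), so no unique extension exists; thus $d(v,3)=2$.

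The only real content is the first step---the existence of a uniquely $3$-colorable $\mathrm{STS}(v)$ for all admissible $v \geq 25$---and this is precisely the Rosa--Colbourn result cited just before the proposition, so within the scope of this paper it may be quoted rather than reproved. Everything after that is the short bookkeeping argument above. The main (mild) obstacle to state carefully is the lower bound direction and the precise reason the threshold is $25$ and not smaller: one should remark that for the small orders $v \in \{7,9,13,15\}$ treated in Section~2 no uniquely $3$-colorable system exists (or at any rate the computed defining numbers are larger), which is consistent with---and explains---the restriction $v \geq 25$ in the hypothesis, and that the admissibility condition $v \equiv 1,3 \pmod 6$ is implicitly in force throughout.
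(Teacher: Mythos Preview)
Your approach is essentially identical to the paper's: invoke the existence of a uniquely $3$-colorable $\mathrm{STS}(v)$ for every admissible $v \geq 25$, and then observe that in such a system a defining set need only name one representative from each of $\chi-1=2$ color classes (your lower-bound argument is slightly more explicit than the paper's, but the content is the same). One small correction on attribution: the existence result for all admissible $v \geq 25$ is due to Forbes (2003), cited inside the paper's proof, whereas the Rosa--Colbourn reference in the preceding paragraph only introduces the \emph{concept} of a uniquely colorable Steiner triple system.
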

\begin{proof}
According to uniquely coloring, if Graph (Hypergraph) $G$, has this coloring then $d(G,\Cchi)= \Cchi -1$. Since it is enough to choose one vertex from each $\Cchi -1$ coloring classes, consequently the color of remaining vertices of the graph will be defined. We knew that there exists for every admissible $v\geq 25$,
a uniquely $3$-colorable Steiner triple system~\cite{MR1953280}.
Hence for  every admissible $v\geq 25$, there exists an {\rm STS}$(v)$ with coloring defining number $2$.
Therefore,  $\forall v \geq 25,  \ \ d(v,3)=2.$
\end{proof}

Remark that, according to proposition \ref{1},
If a {\rm STS}$(v)$  has uniquely $3-$colorable  then  its  defining set is weak. 

\begin{thm}
For each $n\geq 1$, there exits an  $L=$ {\rm STS}$(6n+3)$  such that
 ${\mathcal D}(L,3)=6n+3.$ Therefor,  ${\mathcal D}(6n+3,3)=6n+3.$
\end{thm}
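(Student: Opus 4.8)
The goal is to exhibit, for every $n\ge 1$, a single Steiner triple system $L$ on $v=6n+3$ points whose largest minimal defining set has full size $v$; equivalently, the only minimal defining set is the whole vertex set, which means that \emph{every} proper subset of $V$ (with any colour assignment) fails to force a unique $3$-colouring. Since a defining set $S$ must determine a $3$-colouring uniquely, showing $\mathcal D(L,3)=6n+3$ amounts to showing that for each vertex $x$, the set $V\setminus\{x\}$ is \emph{not} a defining set: there must be two distinct ways to $3$-colour $L$ that agree on $V\setminus\{x\}$. This in turn is equivalent to saying that in every $3$-colouring of $L$, \emph{every} vertex $x$ can be individually recoloured — i.e.\ $x$ lies in no monochromatic-forcing configuration — so that the colour of $x$ is never forced by the colours of all the other points.

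The natural candidate for $L$ is a highly symmetric, ``nearly uniquely colourable'' system: I would take the point set $V=\mathbb Z_3\times\{1,\dots,2n+1\}$ (or a Bose-type construction on $\mathbb Z_{2n+1}\times\mathbb Z_3$) in which the $3$-colouring by the first coordinate is forced to be \emph{the} partition into colour classes, each of size exactly $2n+1=\frac{v}{3}$, so that the coloring pattern is the balanced one $(\frac v3,\frac v3,\frac v3)$. The key structural property to establish about this $L$ is: the partition $\{P_1,P_2,P_3\}$ into the three coordinate-classes is the unique $3$-colouring up to permuting colours (use the earlier remark on uniquely $3$-colourable STS, or prove it directly from the block structure), \emph{and} for each point $x$, moving $x$ from its class $P_i$ to either other class $P_j$ still yields a proper colouring — i.e.\ $x$ together with the two points of any block through $x$ never has the other two points monochromatic of a single other colour. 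Because the colouring is balanced and each point lies in $r=\frac{v-1}{2}=3n+1$ blocks, one checks that the two non-$x$ points of a block through $x$ receive the two \emph{other} colours (one each), never a repeated colour; hence recolouring $x$ to either other colour creates no monochromatic block. This gives, for each $x$, a second colouring of $L$ agreeing with the first off $\{x\}$, so $V\setminus\{x\}$ is not a defining set; since $x$ was arbitrary, no proper subset of $V$ is a defining set, whence $V$ itself is the unique (hence largest) minimal defining set and $\mathcal D(L,3)=v=6n+3$. The matching upper bound $\mathcal D(L,3)\le v$ is trivial, and $\mathcal D(6n+3,3)=6n+3$ follows since $6n+3$ points cannot be exceeded.

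The main obstacle is the construction itself: I need a family of $\mathrm{STS}(6n+3)$ that is simultaneously (a) uniquely $3$-colourable with the \emph{balanced} pattern, and (b) ``edge-critical from above'' in the sense that no single point is colour-forced. Property (a) is where I would lean on the Rosa--Colbourn theory and the existence results for uniquely colourable systems cited as \cite{MR1953280}, but I must be careful: those give uniquely $3$-colourable systems for $v\ge 25$, and the balanced pattern may need a tailored construction (a product or recursive construction starting from $\mathrm{STS}(9)$, whose balanced colouring $(3,3,3)$ is already uniquely colourable, is the most promising seed). Property (b) — that every block through every point meets all three colour classes — is a strong ``rainbow at every block through a fixed point'' condition; I expect verifying it to reduce, in the coordinate model, to a clean number-theoretic statement about the base blocks modulo $2n+1$, but pinning down a base-block family that works for all $n$ is the delicate step. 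If a fully explicit infinite family is awkward, a fallback is a recursive doubling/tripling argument: show that if $L_0$ on $v_0$ points has the two properties, so does a suitable $\mathrm{STS}(3v_0)$ (or $\mathrm{STS}(2v_0+1)$), and bootstrap from a small verified base case such as $\mathrm{STS}(9)$ or $\mathrm{STS}(15)_{\sharp l}$ with $\mathcal D=15$ already found in the computer search.
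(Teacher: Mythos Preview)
Your strategy---exhibit a Bose-type $\mathrm{STS}(6n+3)$ together with a $3$-colouring in which every point can be individually recoloured, so that $V$ itself is a minimal defining set---is exactly what the paper does. But two genuine errors derail the execution.

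First, you conflate ``$\mathcal{D}(L,3)=v$'' with ``$V$ is the \emph{only} minimal defining set''. The former only requires \emph{one} minimal defining set of size $v$; smaller minimal defining sets may (and do) coexist. Consequently you need just \emph{one} $3$-colouring $\phi$ such that $\phi|_{V\setminus\{x\}}$ has a second extension for every $x$---not the universal statement ``in every $3$-colouring every vertex is recolourable''. This matters because your properties (a) and (b) are mutually exclusive: if $L$ were uniquely $3$-colourable and you recoloured a single point into another class, you would produce a \emph{different} colour-class partition, contradicting uniqueness. No system can satisfy both, so the hunt for a uniquely $3$-colourable family (and the Forbes/recursion machinery you propose) is a dead end here.

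Second, your claim that in the Bose colouring ``the two non-$x$ points of a block through $x$ receive the two other colours, never a repeated colour'' is false. Colouring $Q\times\{1,2,3\}$ by second coordinate, the block $\{(x,j),(y,j),(x\circ y,\,j{+}1)\}$ has colour pattern $(j,j,j{+}1)$, not rainbow. The correct (and sufficient) observation is: for $p=(a,i)$, the only blocks through $p$ whose other two points share a colour are those of the form $\{(x,i{-}1),(y,i{-}1),(a,i)\}$ with $x\circ y=a$, and that shared colour is always $i{-}1$. Hence $p$ can be recoloured to $i{+}1$ (though not to $i{-}1$) without creating a monochromatic block---one alternative colour is all you need. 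With this correction the Bose construction works directly for every $n\ge 1$; no seed case or recursion is required.
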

\begin{proof}
Considering the Bose construction (\cite{MR2469212}), we  construct one $3$-colorable
${\rm STS}(v)$ for each $v =6n+3$. Note that in the Bose construction 
the elements are  $ V=Q \times \{1,2,3\}$, where $Q=\{1,2,\ldots,2n+1\}$ is an idempotent symmetric Latin square (or  $(Q,\circ)$ is an idempotent commutative quasigroup) of order $(2n+1)$,  and the blocks are defined as
\begin{center}
${\mathcal{B}}= \Big\{\{(x,1),(x,2), (x,3)\}  \ | \ x \in Q \Big\} \ \cup $
$\Big\{ \{(x,i), (y,i), (x \circ y, i+1 \  (mod \ 3)) \} \ | \  x,y \in Q, \  x < y, \ i=1,2,3 \Big\}\cdot $
\end{center}
See Figure \ref{fig: Bose}.
If we take the second components  of any ordered pairs of $V$, as the color of the elements of blocks, then the pattern of coloring is  $(2n+1, 2n+1, 2n+1)$. On the other hand, according to the Figure \ref{fig: Bose},
 if we use  red color for elements of the first row, yellow  color for elements of  the second row, and
green color for elements of  the third row then we have pattern of
coloring  $(2n+1,2n+1,2n+1)$.
For this patten coloring, the defining set is all of elements of design. Because according to this coloring, if the color of all elements except of one element such as $x$, are defined, then the color of $x$ is not defined (See Figure \ref{fig: Bose}). Therefore,  $V$ is the minimum defining set for this coloring.  
Hence, for any $V=6n+3$ we can construct one ${\rm STS}(v)$ such that the size of largest minimal defining set is $6n+3$, so  ${\mathcal D}(6n+3,3)=6n+3$.
\end{proof}
\begin{figure}[ht]
\begin{center}
\includegraphics[scale=.65]{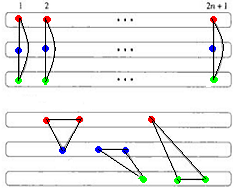}
\caption{The blocks of ${\rm STS}(v)$  are constructed with Bose construction \cite{MR2469212}.}
\label{fig: Bose}
\end{center}
\end{figure}
\begin{thm}
For each $n\geq 1$, there exits an  $M=$ {\rm STS}$(6n+1)$  such that
 ${\mathcal D}(M,3) \ge 5n+1.$ Therefore, ${\mathcal D}(6n+1,3)\ge 5n+1.$
\end{thm}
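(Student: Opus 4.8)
The plan is to imitate the Bose-type construction used for $v = 6n+3$, but now with the Skolem construction, which produces an $\mathrm{STS}(6n+1)$ on the point set $V = \{\infty\} \cup (Q \times \{1,2,3\})$ where $Q = \{1,2,\dots,2n\}$ carries a half-idempotent commutative quasigroup $(Q,\circ)$. The blocks fall into three families: the triple $\{\infty, (x,1),(x,2)\}$ (or similar "through $\infty$" triples) that link $\infty$ to pairs of points in the columns, the "vertical" triples $\{(x,1),(x,2),(x,3)\}$ for those $x$ that remain idempotent, and the "horizontal/diagonal" triples $\{(x,i),(y,i),(x\circ y, i{+}1)\}$. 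First I would fix the colouring that assigns colour $i$ to every point of row $i$ (i.e.\ every point $(x,i)$ gets the $i$-th colour) and picks some colour for $\infty$; as in the previous theorem, the diagonal triples are automatically polychromatic and the vertical triples are rainbow, so this is a genuine $3$-colouring, with pattern close to $(2n+1,2n,2n)$.

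Next I would analyse which single points can be removed from this colouring and still have their colour forced, exactly as in the $6n+3$ case. The key structural point is that for a point $(x,i)$ to be forced, there must be two blocks through it whose other four entries are already coloured with two monochromatic pairs in the two colours $\neq i$; but the row-colouring makes almost every block through $(x,i)$ have at most one "new" colour among its companions, so for most points no such forcing pair exists. The difference from the $6n+3$ construction is that the apex $\infty$ and the handful of genuinely idempotent elements behave specially, and a bounded number of points near them may be forced. So rather than getting the whole of $V$ as the unique minimal defining set, I would only be able to guarantee that a defining set must retain all but at most $n$ (roughly) of the points — giving a minimal defining set of size at least $|V| - n = (6n+1) - n = 5n+1$. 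Concretely: partition $V$ minus a small forced core into blocks-of-three (the columns), argue that in each column at most one point's colour can be deduced from the rest of the configuration, and conclude the defining set must meet each such column in at least two points.

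The main obstacle I expect is the bookkeeping around $\infty$ and the non-idempotent elements of the Skolem quasigroup: one must show that the colours that \emph{do} get forced there number no more than $n$, so that the lower bound $5n+1$ genuinely survives, and one must exhibit an actual minimal defining set achieving this, which requires checking that after deleting a carefully chosen set of $n$ points the extension really is unique (no unintended propagation). I would handle this by choosing, in each of $n$ suitable columns, one point to delete, verifying via the row-colouring that its colour is the unique one consistent with the two triples through it, and then checking that no chain reaction occurs — the $\lambda = 1$ property of a Steiner triple system, already exploited in Theorem 3.1, is what prevents cascading and keeps the argument local. Finally, taking the maximum over $\mathrm{STS}(6n+1)$'s gives $\mathcal{D}(6n+1,3) \ge 5n+1$.
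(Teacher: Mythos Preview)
Your scaffolding is right --- Skolem construction plus the row colouring --- but the concrete mechanics are off in two places, and as written the argument does not reach $5n+1$.

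First, the blocks through $\infty$ in the Skolem system are not $\{\infty,(x,1),(x,2)\}$; they are $\{\infty,(x+n,i),(x,i{+}1)\}$ for $1\le x\le n$ and $i=1,2,3$. This matters, because the whole forcing argument for the removed points hinges on the fact that $\{\infty,(x+n,3),(x,1)\}$ has two points of colour~1 (namely $\infty$ and $(x,1)$), so $(x+n,3)$ cannot receive colour~1; combined with a type-3 block $\{(a,2),(b,2),(x+n,3)\}$ (two points of colour~2), this pins $(x+n,3)$ to colour~3. Your block description would not produce this.

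Second, your column-counting sketch is too coarse. Even granting that in each of the $2n$ ``columns'' $\{(x,1),(x,2),(x,3)\}$ at most one colour can be deduced, you only get $|S|\ge 2\cdot 2n + 1 = 4n+1$, not $5n+1$. The paper does not argue via a uniform column bound; it exhibits the explicit set
\[
S=\{(x,i):x\in Q,\ i=1,2\}\ \cup\ \{(x,3):1\le x\le n\}\ \cup\ \{\infty\},
\]
of size exactly $5n+1$, shows (via the two-block argument above) that $S$ forces the remaining points $(n{+}1,3),\dots,(2n,3)$, and then checks minimality: $\infty$ lies only in rainbow blocks under this colouring so is never forced, and each $(x,i)\in S$ sits in blocks that impose only a single colour restriction, so deleting it leaves its colour ambiguous. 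That direct exhibition is what you are missing; the ``at most $n$ forced points'' heuristic is correct in spirit, but the points that get forced are precisely the second half of row~3, and you need to name them and verify both directions.
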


\begin{proof}
Considering the Skolem construction (\cite{MR2469212}), we  construct one $3$-colorable
${\rm STS}(v)$ for each $v =6n+1$. Note that in  the Skolem construction  the elements are $ V=(Q \times \{1,2,3\}) \cup \{\infty\}$, where $Q=\{1,2,\ldots,2n\}$ is a half  idempotent symmetric Latin square (or $(Q,\circ)$ is a half idempotent commutative quasigroup) of order $2n$,  and the blocks are defined as 
\vspace*{-0.2cm}
\begin{center}
${\mathcal{B}}= \Big\{\{(x,1),(x,2), (x,3)\} \ | \ x \in Q, 1 \leq x \leq n  \Big\} \ \cup$ \\
$\Big\{ \{ \infty, (x+n,i), (x,i+1 \  (mod  \ 3))\} \ | \  x\in Q, \ 1 \leq x \leq n, \ i=1,2,3 \Big\}
\ \cup$ \\
$\Big\{ \{(x,i), (y,i), (x \circ y, i+1 \   ( mod \ 3))\} \ | \  x,y \in Q, \  x < y, \ i=1,2,3 \Big\}\cdot$
\end{center}
See Figure  \ref{fig: skolem}.
  If we color elements of $V$ except $\infty$ similarly to the Bose construction and  take the color of
$\infty$  as one of the three colors, say red,  then
the coloring pattern is  $(2n+1,2n+1,2n+1)$.
If the color of elements of the set 

$ S=\{(x,i) \  | \ i=1,2 ,  x \in Q\} \cup \{(x,3) \ | \ x \in Q,   1 \leq x \leq n \} \cup
\{\infty\}$ \\
is defined,  then according to the defined coloring, the color of the remaining elements
$\{(x+n,3) \ | \ x \in Q, 1 \leq x \leq n\}$ will be uniquely defined.
Hence, $S$ is a defining set and $|S|=5n+1$.
According to the block of design and  Figure~\ref{fig: skolem}, the element $\infty$ always together with two other elements of different color are in a block.
that is why, the color of  $\infty$ is not determined and must  belong to $S$.
Let  $(x,i)$  be any element of  $S$, by the mention of  coloring, there exists  only one restriction  for the coloring of  $(x,i)$ (See Figure~\ref{fig: skolem}). Hence, if  colors of  all elements of $S$ except
$(x,i)$ are known, we can not define the color of  $(x,i)$, therefore  $(x,i)$ should  belong to $S$.
Hence, for any $V=6n+1$ we can construct one ${\rm STS}(v)$ such that the size of largest minimal defining set is $5n+1$,  so ${\mathcal D}(6n+1,3)\ge 5n+1.$
\end{proof}
\begin{figure}[ht]
\begin{center}
\includegraphics[scale=.56]{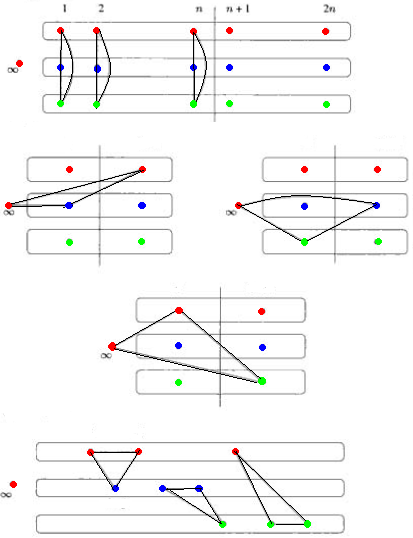}
\caption{The blocks of ${\rm STS}(v)$  are constructed with Skolem construction \cite{MR2469212}.}
\label{fig: skolem}
\end{center}
\end{figure}
\section*{Acknowledgement}
The authors  would like to thank  professor E. S. Mahmoodian for suggesting the problem and
very useful comments.
\newpage
\section*{Appendix}
In the tables below, the color of the elements belonging to the defining set, $S$, is indicated by capital letters.
We denote defining number in coloring pattern $p$ by $d_p({\rm STS}(v),3)$. 
With computer programming, we found  minimum and the  largest minimal defining set  in all coloring patterns. \\\\
$\bullet  \   \ {\rm STS}(7)$:
 In the  ${\rm STS}(7)$, minimum defining set and  the  largest minimal defining set occurred in each of coloring pattern.  The results, are shown in Table~\ref{STS7}.  We can easily check that in each of the  cases $S=\{0,1,...,5\}$ is a minimum defining set, and also  the  largest minimal defining set.  \\
 \begin{table}[h!]
\begin{center}
\caption{ Minimum defining set  for  ${\rm STS}(7)$}
\label{STS7}
\begin{tabular}{|c|c||c|c|c|c|c|c|c|}
\hline
 $p=(c_1,c_2,c_3)$  & $d_p({\rm STS}(7),3)$  &$0$ & $1$ & $2$ & $3$ & $4$ & $5$ & $6$  \\\hline
$p=(4,2,1)$ & 6 & $\bf{R}$ & \bf{R}& \bf{R}& \bf{G}& \bf{G}& \bf{R}& ${y}$
\tabularnewline
\hline  
$p=(3,3,1)$ & 6 & \bf{R}& \bf{R}& \bf{R}& \bf{G}& \bf{G}& \bf{G}& ${y}$
\tabularnewline
\hline 
$p=(3,2,2)$ & 6 & \bf{R}& \bf{R}& \bf{R}& \bf{G}& \bf{G}& \bf{Y}& ${y}$     
 \tabularnewline
\hline 
\end{tabular}
\end{center}
\end{table}
\newline 
 $\bullet  \   \ {\rm STS}(9)$:
In the  ${\rm STS}(9)$,  minimum defining set  occurred in  coloring patterns $p=(3,3,2)$ and $p=(3,3,3)$, and the  largest minimal defining set occurred in coloring pattern $p=(3,3,3)$. The results,  are shown in 
Tables~\ref{mdefining_sts9} and \ref{Ldefining_sts9}. \\\\
\vspace{-0.7cm}
\begin{table}[h!]
\begin{center}
\caption{Minimum defining set  for ${\rm STS}(9)$}
\label{mdefining_sts9}
\begin{tabular}{|c|c||c|c|c|c|c|c|c|c|c|c|}
\hline
 $p=(c_1,c_2,c_3)$  & $d_p({\rm STS}(9),3)$  &$0$ & $1$ & $2$ & $3$ & $4$ & $5$ & $6$ & $7$ & $8$  \\\hline
$p=(3,3,2)$ &  7  & \bf{R}& \bf{G}& \bf{G}& \bf{G}& \bf{R}& \bf{R}&  \bf{R}& ${y}$& ${y}$
 \tabularnewline
\hline 
$p=(3,3,3)$ & 7 & \bf{R} & $r$ & $g$ & \bf{R}& \bf{G}& \bf{Y}& \bf{Y}& \bf{G}  &  \bf{Y}  
 \tabularnewline
\hline 
\end{tabular}
\end{center}
\end{table}
\vspace*{-.5cm}
\begin{table}[h!]
\begin{center}
\caption{The  largest minimal defining set  for ${\rm STS}(9)$}
\label{Ldefining_sts9}
\begin{tabular}{|c|c||c|c|c|c|c|c|c|c|c|c|}
\hline
 $p=(c_1,c_2,c_3)$  & ${\mathcal D}({\rm STS}(9),3)$  &$0$ & $1$ & $2$ & $3$ & $4$ & $5$ & $6$ & $7$ & $8$  \\\hline
$p=(3,3,3)$ & 9  & \bf{R}& \bf{G}& \bf{G}& \bf{Y}& \bf{R}& \bf{R}& \bf{G} & \bf{Y}&  \bf{Y}
\tabularnewline
\hline 
\end{tabular}
\end{center}
\end{table}
\newline
$\bullet  \   \ {\rm STS}(13)$:
In the  ${\rm STS}(13)$,  minimum  and the  largest minimal defining set occurred in  
 coloring pattern $p=(5,4,4)$.  The results,  as shown in 
Tables~\ref{mdefining sts13} and \ref{Ldefining sts13}. \\
\begin{table}[ht]
\caption{Minimum defining set  for ${\rm STS}(13)$}
\label{mdefining sts13}
\begin{tabular}
{|@{\hspace{2pt}}c@{\hspace{2pt}}|c|@{\hspace{2pt}}c@{\hspace{2pt}}||@{\hspace{2pt}}c@{\hspace{4pt}}|c@{\hspace{4pt}}|@{\hspace{4pt}}c@{\hspace{4pt}}|@{\hspace{4pt}}c@{\hspace{2pt}}|@{\hspace{2pt}}c@{\hspace{2pt}}|@{\hspace{2pt}}c@{\hspace{2pt}}|@{\hspace{2pt}}c@{\hspace{2pt}}|@{\hspace{2pt}}c@{\hspace{2pt}}|@{\hspace{2pt}}c@{\hspace{2pt}}|@{\hspace{2pt}}c@{\hspace{2pt}}|@{\hspace{2pt}}c@{\hspace{2pt}}|@{\hspace{2pt}}c@{\hspace{2pt}}|@{\hspace{2pt}}c@{\hspace{2pt}}|@{\hspace{2pt}}c@{\hspace{2pt}}}
\hline
No:   & $(c_1,c_2,c_3)$ & $d_p({\rm STS}(13),3)$
& $0$ & $1$ & $2$ & $3$ & $4$ & $5$ & $6$ & $7$ & $8$ & $9$ &  $10$  &  $11$  &  $12$
\tabularnewline
\hline
${\sharp 1}$ & $(5,4,4)$ & $6$  
& ${y}$& \bf{R}& \bf{G}& ${y}$ & \bf{R} & $g$ & $y$ & \bf{G} & r &  \bf{Y} & $g$ & \bf{R}& ${r}$ \\
\hline
${\sharp 2}$ &  $(5,4,4)$   & $6 $
& \bf{G}& $ \bf{Y}$ & \bf{G}&  \bf{G} & $r$ & \bf{R} & $r $ &  \bf{R} &  ${g}$ &  $y$ & $ y$ & $y$ & $y$
\tabularnewline
\hline
\end{tabular}
\vskip 0.5cm
\caption{The  largest minimal defining set  for ${\rm STS}(13)$}
\label{Ldefining sts13}
\begin{tabular}
{|@{\hspace{1pt}}c@{\hspace{2pt}}|c|@{\hspace{2pt}}c@{\hspace{2pt}}||@{\hspace{2pt}}c@{\hspace{2pt}}|c@{\hspace{2pt}}|@{\hspace{2pt}}c@{\hspace{2pt}}|@{\hspace{2pt}}c@{\hspace{2pt}}|@{\hspace{2pt}}c@{\hspace{2pt}}|@{\hspace{2pt}}c@{\hspace{2pt}}|@{\hspace{2pt}}c@{\hspace{2pt}}|@{\hspace{2pt}}c@{\hspace{2pt}}|@{\hspace{2pt}}c@{\hspace{2pt}}|@{\hspace{2pt}}c@{\hspace{2pt}}|@{\hspace{2pt}}c@{\hspace{2pt}}|@{\hspace{2pt}}c@{\hspace{2pt}}|@{\hspace{2pt}}c@{\hspace{2pt}}|@{\hspace{2pt}}c@{\hspace{1pt}}}
\hline
No:  & $(c_1,c_2,c_3)$ & ${\mathcal{D}}({\rm STS}(13),3)$   
 & $0$ & $1$ & $2$ & $3$ & $4$ & $5$ & $6$ & $7$ & $8$ & $9$ & $10$ & $11$ & $12$
\tabularnewline
\hline
${\sharp 1}$ &  $(5,4,4)$  & $11$ & 
${\bf R}$&  {\bf G} & ${\bf Y}$ & ${\bf Y}$ & ${\bf G}$ & ${\bf Y}$ &   ${\bf G}$ &  ${\bf R}$ &  
${\bf Y}$ &  ${\bf Y}$ &  ${\bf G}$ &  $r$ & $r$ \\
\hline
${\sharp 2}$ &  $(5,4,4)$  & $11$ & 
  ${\bf R}$ & ${\bf G}$ &  ${\bf R}$ & ${\bf R}$ & ${\bf G}$ & ${\bf R}$ & ${\bf G}$  &   ${\bf Y}$ &  ${\bf R}$  &  ${\bf Y}$  & ${\bf G}$  &  $y$ &  $y$
 \tabularnewline
\hline
\end{tabular}
\end{table}
%
\newline
$\bullet  \   \ {\rm STS}(15)$:
Considering the computing results, for all ${\rm STS}(15)$, the smallest minimum defining set and the largest minimal defining set is occorred in  coloring pattern $p=(5,5,5)$. Hence, for minimizing the table, we only mentioned defining set  in this coloring pattern in the  tables. The results,  as shown in 
Tables~\ref{defining sts15 NO1} to \ref{Ldefining sts15 NO2}.\\
\begin{note}
The  results show that, the minimum defining set of {\rm STS}$(7)$, {\rm STS}$(9)$ and {\rm STS}$(13)$ are strong, but the minimum defining set  for some of {\rm STS}$(15)$s are  weak. For example,  the minimum defining set  for  ${\rm STS_{\sharp 40}}(15)$ is weak.
\end{note}

\newpage
\begin{table}[h!]
\caption{Minimum defining set  for ${\rm STS}(15)$}
\label{defining sts15 NO1}
\begin{tabular}{|@{\hspace{3pt}}c@{\hspace{3pt}}|c@{\hspace{3pt}}||@{\hspace{3pt}}c@{\hspace{3pt}}|@{\hspace{3pt}}c@{\hspace{3pt}}|@{\hspace{3pt}}c@{\hspace{3pt}}|c@{\hspace{3pt}}|@{\hspace{3pt}}c@{\hspace{3pt}}|@{\hspace{3pt}}c@{\hspace{3pt}}|@{\hspace{3pt}}c@{\hspace{2pt}}|@{\hspace{2pt}}c@{\hspace{2pt}}|@{\hspace{2pt}}c@{\hspace{2pt}}|@{\hspace{2pt}}c@{\hspace{2pt}}|@{\hspace{2pt}}c@{\hspace{2pt}}|@{\hspace{2pt}}c@{\hspace{2pt}}|@{\hspace{2pt}}c@{\hspace{3pt}}|@{\hspace{3pt}}c@{\hspace{3pt}}|@{\hspace{3pt}}c@{\hspace{3pt}}|@{\hspace{3pt}}c@{\hspace{3pt}}|@{\hspace{3pt}}c@{\hspace{3pt}}|@{\hspace{3pt}}c@{\hspace{3pt}}}
\hline
No:   & $d({\rm STS}(15),3)$  &  
$0$ & $1$ & $2$ & $3$ & $4$ & $5$ & $6$ & $7$ & $8$ & $9$ & $10$ & $11$  & $12$ & $13$  & $14$ 
\tabularnewline
\hline
${\sharp 1}$ &      $ 7 $  &  
$ {\bf R} $ & $ {\bf G} $ & $ {\bf R} $ & $ {\bf G} $ & $ {\bf R} $ & 
$ y $ & $ g $ & $ {\bf G} $ & $ g $ & $ {\bf R} $ & 
$ y $ & $ y $ & $ r $ & $ y $ & $ y $ 
\tabularnewline 
\hline 
${\sharp 2}$  &    $ 6 $  &  
$ {\bf R} $ & $ {\bf G} $ & $ g $ & $ {\bf Y} $ & $ r $ & 
$ {\bf R} $ & $ g $ & $ {\bf Y} $ & $ r $ & $ {\bf R} $ & 
$ g $ & $ g $ & $ y $ & $ y $ & $ y $ 
\tabularnewline 
\hline 
${\sharp 3}$  &      $ 6 $  &  
$ {\bf R} $ & $ {\bf G} $ & $ g $ & $ {\bf Y} $ & $ {\bf G} $ & 
$ y $ & $ y $ & $ {\bf G} $ & $ r $ & $ y $ & 
$ {\bf R} $ & $ y $ & $ r $ & $ r $ & $ g $ 
\tabularnewline 
\hline 
${\sharp 4}$  &    $ 6 $  &  
$ {\bf R} $ & $ {\bf R} $ & $ g $ & $ {\bf G} $ & $ g $ & 
$ {\bf Y} $ & $ r $ & $ {\bf Y} $ & $ r $ & $ y $ & 
$ y $ & $ {\bf R} $ & $ g $ & $ y $ & $ g $ 
\tabularnewline 
\hline 
${\sharp 5}$ &   $ 6 $  &  
$ {\bf R} $ & $ {\bf G} $ & $ g $ & $ {\bf Y} $ & $ r $ & 
$ {\bf R} $ & $ g $ & $ {\bf Y} $ & $ r $ & $ {\bf R} $ & 
$ g $ & $ g $ & $ y $ & $ y $ & $ y $ 
\tabularnewline 
\hline 
${\sharp 6}$  &    $ 6 $  &  
$ {\bf R} $ & $ {\bf R} $ & $ g $ & $ {\bf Y} $ & $ y $ & 
$ r $ & $ {\bf Y} $ & $ {\bf G} $ & $ y $ & $ g $ & 
$ r $ & $ {\bf R} $ & $ g $ & $ y $ & $ g $ 
\tabularnewline 
\hline 
${\sharp 7}$  &    $ 6 $  &  
$ {\bf R} $ & $ r $ & $ g $ & $ {\bf Y} $ & $ {\bf G} $ & 
$ {\bf R} $ & $ g $ & $ {\bf Y} $ & $ y $ & $ y $ & 
$ {\bf R} $ & $ g $ & $ y $ & $ g $ & $ r $ 
\tabularnewline 
\hline 
${\sharp 8}$  &  $ 6 $  &  
$ {\bf R} $ & $ {\bf G} $ & $ {\bf Y} $ & $ g $ & $ {\bf G} $ & 
$ y $ & $ y $ & $ {\bf G} $ & $ r $ & $ y $ & 
$ {\bf R} $ & $ r $ & $ y $ & $ g $ & $ r $ 
\tabularnewline 
\hline 
${\sharp 9}$  &  $ 6 $  &  
$ {\bf R} $ & $ {\bf G} $ & $ {\bf Y} $ & $ g $ & $ g $ & 
$ y $ & $ r $ & $ {\bf R} $ & $ y $ & $ {\bf Y} $ & 
$ {\bf G} $ & $ g $ & $ r $ & $ y $ & $ r $ 
\tabularnewline 
\hline 
${\sharp 10}$  &    $ 6 $  &  
$ {\bf R} $ & $ {\bf R} $ & $ {\bf G} $ & $ g $ & $ r $ & 
$ g $ & $ {\bf Y} $ & $ g $ & $ {\bf R} $ & $ g $ & 
$ y $ & $ {\bf Y} $ & $ r $ & $ y $ & $ y $ 
\tabularnewline 
\hline 
${\sharp 11}$  &   $ 6 $  &  
$ {\bf R} $ & $ {\bf G} $ & $ {\bf Y} $ & $ y $ & $ r $ & 
$ g $ & $ g $ & $ {\bf R} $ & $ y $ & $ {\bf Y} $ & 
$ {\bf G} $ & $ r $ & $ y $ & $ r $ & $ g $ 
\tabularnewline 
\hline 
${\sharp 12}$  &  $ 6 $  &  
$ {\bf R} $ & $ {\bf R} $ & $ {\bf G} $ & $ {\bf Y} $ & $ r $ & 
$ g $ & $ g $ & $ r $ & $ {\bf Y} $ & $ y $ & 
$ y $ & $ {\bf R} $ & $ y $ & $ g $ & $ g $ 
\tabularnewline 
\hline 
${\sharp 13}$  &     $ 6 $  &  
$ {\bf R} $ & $ {\bf G} $ & $ g $ & $ {\bf Y} $ & $ r $ & 
$ {\bf R} $ & $ g $ & $ {\bf R} $ & $ y $ & $ y $ & 
$ r $ & $ {\bf G} $ & $ y $ & $ y $ & $ g $ 
\tabularnewline 
\hline 
${\sharp 14}$  &     $ 6 $  &  
$ {\bf R} $ & $ {\bf G} $ & $ {\bf Y} $ & $ r $ & $ g $ & 
$ y $ & $ {\bf R} $ & $ {\bf G} $ & $ g $ & $ y $ & 
$ {\bf R} $ & $ y $ & $ r $ & $ y $ & $ g $ 
\tabularnewline 
\hline 
${\sharp 15}$  &   $ 6 $  &  
$ {\bf R} $ & $ {\bf G} $ & $ {\bf G} $ & $ y $ & $ y $ & 
$ y $ & $ g $ & $ {\bf R} $ & $ y $ & $ g $ & 
$ {\bf R} $ & $ r $ & $ {\bf G} $ & $ r $ & $ y $ 
\tabularnewline 
\hline 
${\sharp 16}$  &   $ 6 $  &  
$ {\bf R} $ & $ {\bf G} $ & $ {\bf Y} $ & $ r $ & $ {\bf G} $ & 
$ g $ & $ r $ & $ {\bf Y} $ & $ {\bf G} $ & $ y $ & 
$ y $ & $ r $ & $ y $ & $ g $ & $ r $ 
\tabularnewline 
\hline 
${\sharp 17}$  &    $ 6 $  &  
$ {\bf R} $ & $ {\bf G} $ & $ {\bf Y} $ & $ g $ & $ {\bf R} $ & 
$ r $ & $ g $ & $ {\bf Y} $ & $ y $ & $ r $ & 
$ y $ & $ g $ & $ {\bf R} $ & $ y $ & $ g $ 
\tabularnewline 
\hline 
${\sharp 18}$  &     $ 6 $  &  
$ {\bf R} $ & $ {\bf G} $ & $ {\bf Y} $ & $ g $ & $ {\bf G} $ & 
$ y $ & $ r $ & $ {\bf G} $ & $ r $ & $ r $ & 
$ y $ & $ y $ & $ r $ & $ g $ & $ {\bf Y} $ 
\tabularnewline 
\hline 
${\sharp 19}$  &    $ 6 $  &  
$ {\bf R} $ & $ {\bf G} $ & $ y $ & $ {\bf G} $ & $ r $ & 
$ r $ & $ g $ & $ {\bf Y} $ & $ r $ & $ y $ & 
$ {\bf Y} $ & $ g $ & $ r $ & $ {\bf Y} $ & $ g $ 
\tabularnewline 
\hline 
${\sharp 20}$  &   $ 6 $  &  
$ {\bf R} $ & $ {\bf G} $ & $ {\bf Y} $ & $ y $ & $ r $ & 
$ {\bf Y} $ & $ r $ & $ {\bf G} $ & $ g $ & $ y $ & 
$ y $ & $ g $ & $ r $ & $ {\bf R} $ & $ g $ 
\tabularnewline 
\hline 
${\sharp 21}$  & $6$&  ${\bf R}$ & ${\bf G}$ & ${\bf Y} $ & $ {\bf R} $ & $ y $ &  $ r $ & $ y $ & $ g $ & $ r $ & $ {\bf Y} $ & 
$ y $ & $ g $ & $ {\bf G} $ & $ g $ & $ r $ 
\tabularnewline 
\hline 
${\sharp 22}$  &   $ 6 $  &  
$ {\bf R} $ & $ {\bf G} $ & $ {\bf Y} $ & $ {\bf R} $ & $ y $ & 
$ r $ & $ g $ & $ r $ & $ g $ & $ {\bf R} $ & 
$ y $ & $ y $ & $ y $ & $ g $ & $ {\bf G} $ 
\tabularnewline 
\hline 
${\sharp 23}$  &    $ 6 $  &  
$ {\bf R} $ & $ {\bf R} $ & $ {\bf G} $ & $ {\bf Y} $ & $ {\bf G} $ & 
$ {\bf Y} $ & $ r $ & $ r $ & $ g $ & $ y $ & 
$ y $ & $ r $ & $ g $ & $ y $ & $ g $ 
\tabularnewline 
\hline 
${\sharp 24}$  &   $ 6 $  &  
$ {\bf R} $ & $ {\bf G} $ & $ {\bf Y} $ & $ {\bf G} $ & $ r $ & 
$ r $ & $ g $ & $ y $ & $ y $ & $ {\bf Y} $ & 
$ {\bf G} $ & $ g $ & $ r $ & $ y $ & $ r $ 
\tabularnewline 
\hline 
${\sharp 25}$  &     $ 6 $  &  
$ {\bf R} $ & $ {\bf G} $ & $ {\bf Y} $ & $ {\bf Y} $ & $ y $ & 
$ {\bf R} $ & $ g $ & $ r $ & $ y $ & $ g $ & 
$ {\bf G} $ & $ g $ & $ r $ & $ y $ & $ r $ 
\tabularnewline 
\hline 
${\sharp 26}$   &   $ 6 $  &  
$ {\bf R} $ & $ {\bf G} $ & $ {\bf Y} $ & $ {\bf Y} $ & $ r $ & 
$ {\bf R} $ & $ g $ & $ {\bf Y} $ & $ y $ & $ g $ & 
$ y $ & $ g $ & $ r $ & $ r $ & $ g $ 
\tabularnewline 
\hline 
${\sharp 27}$  &    $ 6 $  &  
$ {\bf R} $ & $ {\bf G} $ & $ {\bf Y} $ & $ {\bf Y} $ & $ r $ & 
$ {\bf R} $ & $ g $ & $ g $ & $ g $ & $ y $ & 
$ y $ & $ y $ & $ {\bf R} $ & $ g $ & $ r $ 
\tabularnewline 
\hline 
${\sharp 28}$  &   $ 6 $  &  
$ {\bf R} $ & $ {\bf G} $ & $ {\bf Y} $ & $ {\bf Y} $ & $ {\bf G} $ & 
$ y $ & $ r $ & $ g $ & $ r $ & $ y $ & 
$ r $ & $ g $ & $ r $ & $ {\bf Y} $ & $ g $ 
\tabularnewline 
\hline 
${\sharp 29}$  &  $ 6 $  &  
$ {\bf R} $ & $ {\bf G} $ & $ {\bf Y} $ & $ {\bf Y} $ & $ r $ & 
$ {\bf R} $ & $ g $ & $ {\bf Y} $ & $ y $ & $ r $ & 
$ g $ & $ g $ & $ g $ & $ y $ & $ r $ 
\tabularnewline 
\hline 
${\sharp 30}$   &  $ 6 $  &  
$ {\bf R} $ & $ {\bf G} $ & $ {\bf Y} $ & $ {\bf Y} $ & $ {\bf G} $ & 
$ y $ & $ r $ & $ r $ & $ y $ & $ r $ & 
$ g $ & $ g $ & $ {\bf Y} $ & $ r $ & $ g $ 
\tabularnewline 
\hline 
${\sharp 31}$   &  $ 6 $  &  
$ {\bf R} $ & $ {\bf R} $ & $ {\bf G} $ & $ {\bf G} $ & $ {\bf Y} $ & 
$ g $ & $ r $ & $ {\bf R} $ & $ g $ & $ y $ & 
$ y $ & $ r $ & $ g $ & $ y $ & $ y $ 
\tabularnewline 
\hline 
${\sharp 32}$   &    $ 6 $  &  
$ {\bf R} $ & $ {\bf G} $ & $ {\bf Y} $ & $ {\bf G} $ & $ {\bf Y} $ & 
$ r $ & $ y $ & $ r $ & $ y $ & $ {\bf G} $ & 
$ y $ & $ r $ & $ g $ & $ g $ & $ r $ 
\tabularnewline 
\hline 
${\sharp 33}$   &   $ 6 $  &  
$ {\bf R} $ & $ {\bf G} $ & $ {\bf Y} $ & $ {\bf Y} $ & $ {\bf G} $ & 
$ g $ & $ r $ & $ {\bf R} $ & $ g $ & $ y $ & 
$ y $ & $ y $ & $ r $ & $ g $ & $ r $ 
\tabularnewline 
\hline 
${\sharp 34}$   &   $ 6 $  &  
$ {\bf R} $ & $ {\bf G} $ & $ {\bf Y} $ & $ {\bf Y} $ & $ y $ & 
$ {\bf R} $ & $ g $ & $ g $ & $ g $ & $ y $ & 
$ {\bf R} $ & $ g $ & $ r $ & $ y $ & $ r $ 
\tabularnewline 
\hline 
${\sharp 35}$ &   $ 6 $  &  
$ {\bf R} $ & $ {\bf G} $ & $ {\bf Y} $ & $ {\bf Y} $ & $ {\bf G} $ & 
$ {\bf R} $ & $ y $ & $ g $ & $ g $ & $ r $ & 
$ y $ & $ y $ & $ r $ & $ r $ & $ g $ 
\tabularnewline 
\hline 
${\sharp 36}$   & $ 6 $  &  
$ {\bf R} $ & $ {\bf G} $ & $ {\bf Y} $ & $ {\bf Y} $ & $ r $ & 
$ {\bf R} $ & $ g $ & $ r $ & $ y $ & $ r $ & 
$ g $ & $ {\bf G} $ & $ g $ & $ y $ & $ y $ 
\tabularnewline 
\hline 
${\sharp 37}$   &  $ 6 $  &  
$ {\bf R} $ & $ {\bf G} $ & $ {\bf Y} $ & $ {\bf G} $ & $ g $ & 
$ y $ & $ {\bf Y} $ & $ r $ & $ y $ & $ {\bf R} $ & 
$ g $ & $ g $ & $ r $ & $ y $ & $ r $ 
\tabularnewline 
\hline 
${\sharp 38}$  &    $ 6 $  &  
$ {\bf R} $ & $ {\bf G} $ & $ {\bf Y} $ & $ {\bf Y} $ & $ {\bf G} $ & 
$ g $ & $ y $ & $ g $ & $ {\bf R} $ & $ r $ & 
$ y $ & $ y $ & $ r $ & $ g $ & $ r $ 
\tabularnewline 
\hline 
${\sharp 39}$   &   $ 6 $  &  
$ {\bf R} $ & $ {\bf G} $ & $ {\bf Y} $ & $ {\bf G} $ & $ {\bf Y} $ & 
$ r $ & $ y $ & $ {\bf R} $ & $ g $ & $ g $ & 
$ y $ & $ r $ & $ y $ & $ r $ & $ g $ 
\tabularnewline 
\hline 
${\sharp 40}$   &    $ 5 $  &  
$ r $ & $ r $ & $ g $ & $ {\bf G} $ & $ y $ & 
$ g $ & $ {\bf R} $ & $ r $ & $ {\bf Y} $ & $ y $ & 
$ y $ & $ r $ & $ {\bf Y} $ & $ g $ & $ {\bf G} $ 
\tabularnewline 
\hline 
\end{tabular}
\end{table}
\begin{table}[h!]
\caption{Minimum defining set  for ${\rm STS}(15)$}
\label{defining sts15 NO2}
\begin{tabular}{|@{\hspace{3pt}}c@{\hspace{3pt}}|c@{\hspace{3pt}}||@{\hspace{3pt}}c@{\hspace{3pt}}|@{\hspace{3pt}}c@{\hspace{3pt}}|@{\hspace{3pt}}c@{\hspace{3pt}}|c@{\hspace{3pt}}|@{\hspace{3pt}}c@{\hspace{3pt}}|@{\hspace{3pt}}c@{\hspace{3pt}}|@{\hspace{3pt}}c@{\hspace{2pt}}|@{\hspace{2pt}}c@{\hspace{2pt}}|@{\hspace{2pt}}c@{\hspace{2pt}}|@{\hspace{2pt}}c@{\hspace{2pt}}|@{\hspace{2pt}}c@{\hspace{2pt}}|@{\hspace{2pt}}c@{\hspace{2pt}}|@{\hspace{2pt}}c@{\hspace{3pt}}|@{\hspace{3pt}}c@{\hspace{3pt}}|@{\hspace{3pt}}c@{\hspace{3pt}}|@{\hspace{3pt}}c@{\hspace{3pt}}|@{\hspace{3pt}}c@{\hspace{3pt}}|@{\hspace{3pt}}c@{\hspace{3pt}}}
\hline
No:  &  $d({\rm STS}(15),3)$  & 
$0$ & $1$ & $2$ & $3$ & $4$ & $5$ & $6$ & $7$ & $8$ & $9$ & $10$ & $11$  & $12$ & $13$  & $14$ 
\tabularnewline
\hline
${\sharp 41}$ &  $ 6 $  &  
$ {\bf R} $ & $ {\bf G} $ & $ {\bf Y} $ & $ {\bf Y} $ & $ {\bf G} $ & 
$ g $ & $ r $ & $ g $ & $ {\bf R} $ & $ r $ & 
$ g $ & $ y $ & $ r $ & $ y $ & $ y $ 
\tabularnewline
\hline
${\sharp 42}$  &   $ 6 $  &  
$ {\bf R} $ & $ {\bf G} $ & $ {\bf Y} $ & $ {\bf Y} $ & $ {\bf G} $ & 
$ g $ & $ y $ & $ r $ & $ g $ & $ r $ & 
$ y $ & $ {\bf Y} $ & $ r $ & $ g $ & $ r $ 
\tabularnewline 
\hline 
${\sharp 43}$   &   $ 6 $  &  
$ {\bf R} $ & $ {\bf G} $ & $ {\bf Y} $ & $ {\bf Y} $ & $ {\bf G} $ & 
$ {\bf R} $ & $ y $ & $ r $ & $ y $ & $ g $ & 
$ g $ & $ y $ & $ r $ & $ r $ & $ g $ 
\tabularnewline 
\hline 
${\sharp 44}$  &      $ 6 $  &  
$ {\bf R} $ & $ {\bf G} $ & $ {\bf Y} $ & $ {\bf Y} $ & $ {\bf G} $ & 
$ {\bf R} $ & $ y $ & $ g $ & $ g $ & $ r $ & 
$ y $ & $ y $ & $ r $ & $ r $ & $ g $ 
\tabularnewline 
\hline 
${\sharp 45}$ &    $ 6 $  &  
$ {\bf R} $ & $ {\bf G} $ & $ {\bf Y} $ & $ {\bf G} $ & $ {\bf Y} $ & 
$ y $ & $ {\bf R} $ & $ y $ & $ r $ & $ g $ & 
$ g $ & $ r $ & $ y $ & $ g $ & $ r $ 
\tabularnewline 
\hline 
${\sharp 46}$   &  $ 6 $  &  
$ {\bf R} $ & $ {\bf G} $ & $ {\bf Y} $ & $ {\bf Y} $ & $ {\bf G} $ & 
$ {\bf R} $ & $ y $ & $ g $ & $ y $ & $ r $ & 
$ g $ & $ y $ & $ r $ & $ g $ & $ r $ 
\tabularnewline 
\hline 
${\sharp 47}$   &     $ 6 $  &  
$ {\bf R} $ & $ {\bf G} $ & $ {\bf Y} $ & $ {\bf Y} $ & $ {\bf G} $ & 
$ {\bf R} $ & $ y $ & $ r $ & $ y $ & $ g $ & 
$ g $ & $ y $ & $ r $ & $ g $ & $ r $ 
\tabularnewline 
\hline 
${\sharp 48}$   &    $ 6 $  &  
$ {\bf R} $ & $ {\bf G} $ & $ {\bf Y} $ & $ {\bf Y} $ & $ r $ & 
$ {\bf R} $ & $ g $ & $ g $ & $ y $ & $ r $ & 
$ g $ & $ r $ & $ g $ & $ {\bf Y} $ & $ y $ 
\tabularnewline 
\hline 
${\sharp 49}$   &   $ 6 $  &  
$ {\bf R} $ & $ {\bf G} $ & $ {\bf Y} $ & $ {\bf G} $ & $ y $ & 
$ y $ & $ {\bf G} $ & $ {\bf R} $ & $ y $ & $ g $ & 
$ r $ & $ r $ & $ g $ & $ r $ & $ y $ 
\tabularnewline 
\hline 
${\sharp 50}$  &   $ 6 $  &  
$ {\bf R} $ & $ {\bf G} $ & $ {\bf Y} $ & $ {\bf G} $ & $ {\bf Y} $ & 
$ y $ & $ {\bf R} $ & $ y $ & $ r $ & $ g $ & 
$ g $ & $ r $ & $ y $ & $ g $ & $ r $ 
\tabularnewline 
\hline 
${\sharp 51}$  &    $ 6 $  &  
$ {\bf R} $ & $ {\bf G} $ & $ {\bf Y} $ & $ {\bf Y} $ & $ {\bf G} $ & 
$ {\bf R} $ & $ y $ & $ r $ & $ g $ & $ g $ & 
$ y $ & $ y $ & $ r $ & $ g $ & $ r $ 
\tabularnewline 
\hline 
${\sharp 52}$  &     $ 6 $  &  
$ {\bf R} $ & $ {\bf G} $ & $ {\bf Y} $ & $ {\bf Y} $ & $ {\bf G} $ & 
$ y $ & $ y $ & $ r $ & $ g $ & $ g $ & 
$ {\bf R} $ & $ r $ & $ g $ & $ y $ & $ r $ 
\tabularnewline 
\hline 
${\sharp 53}$  &    $ 6 $  &  
$ {\bf R} $ & $ {\bf G} $ & $ {\bf Y} $ & $ g $ & $ {\bf R} $ & 
$ y $ & $ {\bf Y} $ & $ {\bf R} $ & $ y $ & $ r $ & 
$ g $ & $ g $ & $ r $ & $ y $ & $ g $ 
\tabularnewline 
\hline 
${\sharp 54}$  &      $ 6 $  &  
$ {\bf R} $ & $ {\bf G} $ & $ {\bf Y} $ & $ {\bf Y} $ & $ y $ & 
$ {\bf R} $ & $ y $ & $ g $ & $ r $ & $ r $ & 
$ g $ & $ r $ & $ {\bf Y} $ & $ g $ & $ g $ 
\tabularnewline 
\hline 
${\sharp 55}$   &   $ 6 $  &  
$ {\bf R} $ & $ {\bf G} $ & $ {\bf Y} $ & $ {\bf Y} $ & $ {\bf G} $ & 
$ {\bf R} $ & $ y $ & $ g $ & $ g $ & $ r $ & 
$ y $ & $ y $ & $ r $ & $ g $ & $ r $ 
\tabularnewline 
\hline 
${\sharp 56}$   &   $ 5 $  &  
$ r $ & $ r $ & $ g $ & $ y $ & $ {\bf Y} $ & 
$ y $ & $ r $ & $ {\bf G} $ & $ {\bf G} $ & $ r $ & 
$ g $ & $ {\bf Y} $ & $ g $ & $ y $ & $ {\bf R} $ 
\tabularnewline 
\hline 
${\sharp 57}$  &    $ 6 $  &  
$ {\bf R} $ & $ {\bf G} $ & $ {\bf Y} $ & $ {\bf Y} $ & $ {\bf G} $ & 
$ {\bf R} $ & $ y $ & $ g $ & $ g $ & $ r $ & 
$ y $ & $ y $ & $ r $ & $ g $ & $ r $ 
\tabularnewline 
\hline 
${\sharp 58}$  &     $ 6 $  &  
$ {\bf R} $ & $ {\bf G} $ & $ {\bf Y} $ & $ {\bf G} $ & $ {\bf Y} $ & 
$ y $ & $ y $ & $ r $ & $ {\bf G} $ & $ g $ & 
$ r $ & $ g $ & $ r $ & $ y $ & $ r $ 
\tabularnewline 
\hline 
 ${\sharp 59}$  &    $ 6 $  &  
$ {\bf R} $ & $ {\bf G} $ & $ {\bf Y} $ & $ {\bf G} $ & $ {\bf Y} $ & 
$ y $ & $ {\bf R} $ & $ g $ & $ y $ & $ r $ & 
$ y $ & $ g $ & $ r $ & $ r $ & $ g $ 
\tabularnewline 
\hline 
${\sharp 60}$  &    $ 6 $  &  
$ {\bf R} $ & $ {\bf G} $ & $ {\bf Y} $ & $ {\bf Y} $ & $ {\bf G} $ & 
$ {\bf R} $ & $ y $ & $ g $ & $ g $ & $ y $ & 
$ r $ & $ r $ & $ g $ & $ r $ & $ y $ 
\tabularnewline 
\hline 
${\sharp 61}$   &    $ 6 $  &  
$ {\bf R} $ & $ {\bf G} $ & $ {\bf Y} $ & $ {\bf Y} $ & $ r $ & 
$ y $ & $ r $ & $ g $ & $ y $ & $ r $ & 
$ g $ & $ g $ & $ r $ & $ {\bf Y} $ & $ {\bf G} $ 
\tabularnewline 
\hline 
${\sharp 62}$ &   $ 6 $  &  
$ {\bf R} $ & $ {\bf G} $ & $ {\bf Y} $ & $ {\bf R} $ & $ y $ & 
$ y $ & $ r $ & $ r $ & $ g $ & $ g $ & 
$ g $ & $ {\bf R} $ & $ y $ & $ y $ & $ {\bf G} $ 
\tabularnewline 
\hline 
${\sharp 63}$  &    $ 6 $  &  
$ {\bf R} $ & $ {\bf G} $ & $ {\bf Y} $ & $ {\bf G} $ & $ y $ & 
$ r $ & $ g $ & $ r $ & $ {\bf G} $ & $ r $ & 
$ y $ & $ r $ & $ y $ & $ {\bf Y} $ & $ g $ 
\tabularnewline 
\hline 
${\sharp 64}$  &    $ 6 $  &  
$ {\bf R} $ & $ {\bf G} $ & $ {\bf Y} $ & $ {\bf R} $ & $ y $ & 
$ r $ & $ {\bf G} $ & $ r $ & $ g $ & $ {\bf G} $ & 
$ r $ & $ y $ & $ y $ & $ g $ & $ y $ 
\tabularnewline 
\hline 
${\sharp 65}$   &   $ 6 $  &  
$ {\bf R} $ & $ {\bf G} $ & $ {\bf Y} $ & $ y $ & $ {\bf R} $ & 
$ {\bf G} $ & $ g $ & $ r $ & $ y $ & $ r $ & 
$ g $ & $ {\bf Y} $ & $ r $ & $ y $ & $ g $ 
\tabularnewline 
\hline 
${\sharp 66}$  &  $ 6 $  &  
$ {\bf R} $ & $ {\bf G} $ & $ {\bf Y} $ & $ {\bf R} $ & $ y $ & 
$ {\bf G} $ & $ g $ & $ {\bf G} $ & $ y $ & $ r $ & 
$ y $ & $ g $ & $ r $ & $ y $ & $ r $ 
\tabularnewline 
\hline 
${\sharp 67}$    &    $ 6 $  &  
$ {\bf R} $ & $ {\bf G} $ & $ {\bf Y} $ & $ {\bf R} $ & $ y $ & 
$ r $ & $ {\bf G} $ & $ {\bf G} $ & $ g $ & $ r $ & 
$ y $ & $ r $ & $ y $ & $ y $ & $ g $ 
\tabularnewline 
\hline 
${\sharp 68}$   &  $ 5 $  &  
$ {\bf R} $ & $ r $ & $ g $ & $ g $ & $ y $ & 
$ r $ & $ y $ & $ r $ & $ g $ & $ {\bf Y} $ & 
$ {\bf G} $ & $ r $ & $ y $ & $ {\bf G} $ & $ {\bf Y} $ 
\tabularnewline 
\hline 
${\sharp 69}$    &    $ 6 $  &  
$ {\bf R} $ & $ {\bf G} $ & $ {\bf Y} $ & $ {\bf R} $ & $ y $ & 
$ {\bf Y} $ & $ {\bf G} $ & $ y $ & $ r $ & $ g $ & 
$ g $ & $ r $ & $ y $ & $ r $ & $ g $ 
\tabularnewline 
\hline 
${\sharp 70}$     &    $ 6 $  &  
$ {\bf R} $ & $ {\bf G} $ & $ {\bf Y} $ & $ {\bf G} $ & $ {\bf Y} $ & 
$ r $ & $ g $ & $ y $ & $ r $ & $ y $ & 
$ y $ & $ r $ & $ g $ & $ g $ & $ {\bf R} $ 
\tabularnewline 
\hline 
${\sharp 71}$  &    $ 6 $  &  
$ {\bf R} $ & $ {\bf G} $ & $ {\bf Y} $ & $ {\bf G} $ & $ {\bf Y} $ & 
$ y $ & $ y $ & $ g $ & $ r $ & $ r $ & 
$ {\bf G} $ & $ g $ & $ r $ & $ r $ & $ y $ 
\tabularnewline 
\hline 
${\sharp 72}$   &   $ 6 $  &  
$ {\bf R} $ & $ {\bf G} $ & $ {\bf Y} $ & $ {\bf Y} $ & $ {\bf G} $ & 
$ {\bf R} $ & $ y $ & $ g $ & $ g $ & $ r $ & 
$ g $ & $ r $ & $ y $ & $ y $ & $ r $ 
\tabularnewline 
\hline 
${\sharp 73}$  &    $ 6 $  &  
$ {\bf R} $ & $ {\bf G} $ & $ {\bf Y} $ & $ {\bf G} $ & $ {\bf Y} $ & 
$ y $ & $ {\bf R} $ & $ y $ & $ r $ & $ g $ & 
$ g $ & $ r $ & $ g $ & $ y $ & $ r $ 
\tabularnewline 
\hline 
${\sharp 74}$    & $ 6 $  &  
$ {\bf R} $ & $ {\bf G} $ & $ {\bf Y} $ & $ {\bf G} $ & $ {\bf G} $ & 
$ r $ & $ y $ & $ g $ & $ y $ & $ y $ & 
$ r $ & $ {\bf Y} $ & $ r $ & $ g $ & $ r $ 
\tabularnewline 
\hline 
${\sharp 75}$     &   $ 6 $  &  
$ {\bf R} $ & $ {\bf G} $ & $ {\bf Y} $ & $ {\bf G} $ & $ {\bf Y} $ & 
$ r $ & $ g $ & $ r $ & $ {\bf Y} $ & $ g $ & 
$ r $ & $ y $ & $ g $ & $ r $ & $ y $ 
\tabularnewline 
\hline 
${\sharp 76}$      &   $ 6 $  &  
$ {\bf R} $ & $ {\bf G} $ & $ {\bf Y} $ & $ {\bf Y} $ & $ {\bf G} $ & 
$ r $ & $ y $ & $ r $ & $ g $ & $ g $ & 
$ {\bf Y} $ & $ r $ & $ g $ & $ y $ & $ r $ 
\tabularnewline 
\hline 
${\sharp 77}$      &    $ 6 $  &  
$ {\bf R} $ & $ {\bf G} $ & $ {\bf Y} $ & $ y $ & $ {\bf R} $ & 
$ {\bf R} $ & $ g $ & $ r $ & $ {\bf G} $ & $ g $ & 
$ y $ & $ r $ & $ g $ & $ y $ & $ y $ 
\tabularnewline 
\hline 
${\sharp 78}$      &    $ 6 $  &  
$ {\bf R} $ & $ {\bf G} $ & $ {\bf Y} $ & $ {\bf R} $ & $ {\bf G} $ & 
$ r $ & $ y $ & $ g $ & $ g $ & $ y $ & 
$ {\bf R} $ & $ y $ & $ y $ & $ r $ & $ g $ 
\tabularnewline 
\hline 
${\sharp 79}$     & $ 6 $  &  
$ {\bf R} $ & $ {\bf G} $ & $ {\bf Y} $ & $ {\bf Y} $ & $ {\bf G} $ & 
$ r $ & $ y $ & $ g $ & $ r $ & $ y $ & 
$ r $ & $ g $ & $ {\bf G} $ & $ y $ & $ r $ 
\tabularnewline 
\hline 
${\sharp80}$   &     $ 6 $  &  
$ {\bf R} $ & $ {\bf G} $ & $ {\bf Y} $ & $ {\bf R} $ & $ y $ & 
$ {\bf Y} $ & $ r $ & $ {\bf R} $ & $ g $ & $ g $ & 
$ g $ & $ y $ & $ y $ & $ r $ & $ g $ 
\tabularnewline 
\hline 
\end{tabular}
\end{table}
%
\begin{table}[h!]
\caption{The  largest minimal defining set of ${\rm STS}(15)$}
\label{Ldefining sts15 NO1}
\begin{tabular}{|@{\hspace{3pt}}c@{\hspace{3pt}}|c@{\hspace{3pt}}||@{\hspace{3pt}}c@{\hspace{3pt}}|@{\hspace{3pt}}c@{\hspace{3pt}}|@{\hspace{3pt}}c@{\hspace{3pt}}|c@{\hspace{3pt}}|@{\hspace{3pt}}c@{\hspace{3pt}}|@{\hspace{3pt}}c@{\hspace{3pt}}|@{\hspace{3pt}}c@{\hspace{2pt}}|@{\hspace{2pt}}c@{\hspace{2pt}}|@{\hspace{2pt}}c@{\hspace{2pt}}|@{\hspace{2pt}}c@{\hspace{2pt}}|@{\hspace{2pt}}c@{\hspace{2pt}}|@{\hspace{2pt}}c@{\hspace{2pt}}|@{\hspace{2pt}}c@{\hspace{3pt}}|@{\hspace{3pt}}c@{\hspace{3pt}}|@{\hspace{3pt}}c@{\hspace{3pt}}|@{\hspace{3pt}}c@{\hspace{3pt}}|@{\hspace{3pt}}c@{\hspace{3pt}}|@{\hspace{3pt}}c@{\hspace{3pt}}}
\hline
 No: &   ${\mathcal{D}}({\rm STS}(15),3)$   &   
$0$ & $1$ & $2$ & $3$ & $4$ & $5$ & $6$ & $7$ & $8$ & $9$ & $10$ & $11$  & $12$ & $13$  & $14$
\tabularnewline
\hline
${\sharp1}$   &  $ 7 $  &
$ {\bf R} $ & $ {\bf G} $ & $ {\bf G} $ & $ {\bf G} $ & $ {\bf Y} $ &
$ {y} $ & $ {y} $ & $ {\bf G} $ & $ r $ & $ {\bf Y} $ & $ r $ & $ r $ & $ g $ & $ {y} $ & $ r $
\tabularnewline
\hline
${\sharp 2}$   & $ 11 $  &
$ {\bf R} $ & $ {\bf G} $ & $ r$ & $ {\bf G} $ & $r$ & $ {y} $ & $r $ & $ {\bf G} $ & $  {\bf Y}  $ & $ {\bf Y} $ &  $  {\bf G} $ & $  {\bf Y}  $ & $  {\bf G}  $ & $ {\bf G} $ & $ {\bf Y} $
\tabularnewline
\hline
${\sharp 3}$   &   $ 11 $  &
$ {\bf R} $ & $ {\bf G} $ & $ r$ & $ {\bf G} $ & $r$ &
$ {y} $ & $r $ & $ {\bf G} $ & $  {\bf Y}  $ & $ {\bf Y} $ &  $  {\bf G} $ & $  {\bf Y}  $ & $  {\bf G}  $ & $ {\bf G} $ & $ {\bf Y}  $
 \tabularnewline
\hline      
${\sharp 4}$    & $ 11 $  &
$ {\bf R} $ & $ {\bf G} $ & $ {\bf Y}$ & $ {\bf R} $ & $ {\bf Y}$ &
$ {\bf G}$  & $r $ & $ {\bf G} $ & $  {\bf G}  $ & $ {\bf R} $ &  ${y} $ & $  {\bf Y}  $ & $  {\bf G}  $ & ${y} $ & $ r  $
\tabularnewline
\hline      
${\sharp 5}$ &    $ 11 $  &
$ {\bf R} $ & $ {\bf G} $ & $r$ & $ {\bf G} $ & $ {\bf Y}$ &
$ {\bf Y}$  & ${\bf G} $ & $ {\bf G} $ & $r$ & ${y}$ &   $r$ & $  {\bf Y}  $ & $  {\bf G}  $ & ${\bf G} $ & ${\bf Y} $
\tabularnewline
\hline      
${\sharp 6}$   &    $ 11 $  &   
$ {\bf R} $ & $ {\bf G} $ & $ {\bf R} $ & $ {\bf R} $ & $ {\bf G}$ &
$ {\bf R}$  & ${y}$ & $ {\bf R} $ & $ {\bf G} $ & $ {\bf R} $ &   ${y}$ & $  {\bf G}  $ & $g$ & ${\bf R} $ & ${y}$
\tabularnewline
\hline      
${\sharp 7}$   &    $ 10 $  &
$ {\bf R}$ & $g$ & $g$ & $ {\bf R} $ & $ {\bf Y}$ &
$ {\bf R}$ & ${\bf Y}$ & $ {\bf Y} $ & $ {\bf R} $ & $ {\bf R} $ &
$ {\bf Y}$ & ${\bf R}$ & ${y}$ & $g$ & $g$
\tabularnewline
\hline      
${\sharp 8}$   &    $ 11 $  &
$ {\bf R} $ & $ {\bf R} $ & $ {\bf G}$ & $r$ & $ {\bf Y}$ &
$ {\bf G}$  & $  {\bf Y}$ & $ {\bf G} $ & $  {\bf Y}  $ & $ {\bf Y} $ &
$ {\bf Y}$ & $g$ & $  {\bf R}$ & $r$ & $g$  
\tabularnewline
\hline      
${\sharp 9}$  &   $ 11 $  &
$ {\bf R} $ & $ {\bf R} $ & $ {\bf G}$ & $ {\bf Y}$ & $ {\bf G}$ &
$ {\bf Y}$  & ${y}$ & $ {\bf R} $ & $  {\bf G}  $ & $ {\bf G} $ &
$r$ & $ {\bf Y}$ & ${\bf G}$ & ${y}$ & $r$  
\tabularnewline
\hline      
${\sharp 10}$  &      $ 11 $  &
$ {\bf R} $ & $ {\bf G} $ & $ {\bf R}$ & $ {\bf Y}$ & $ {\bf G}$ &
$ {\bf R}$  & ${y}$ & $ {\bf G} $ & $  {\bf R}  $ & $ {\bf Y} $ &
$ {\bf Y} $ & $ {\bf G}$ & $g$ & $r$ & ${y}$  
\tabularnewline
\hline      
${\sharp 11}$   &    $ 11 $  &
$ {\bf R} $ & $ {\bf G} $ & $ {\bf G}$ & $ {\bf R}$ & $ {\bf G}$ &
$ {\bf Y}$  & ${y}$ & $ {\bf G} $ & $  {\bf R}  $ & $ {\bf Y} $ &
$ {y}$ & ${y}$ & $ {\bf R}$ & $ {\bf G} $ & $r$  
\tabularnewline
\hline      
${\sharp 12}$   &   $ 11 $  &
$ {\bf R} $ & $ {\bf R} $ & $ {\bf G}$ & $ {\bf Y}$ & $ {\bf Y}$ &
$ {\bf G}$  & $r$ & $ {\bf Y} $ & $  {\bf Y}  $ & $ {\bf Y} $ &
$ {\bf G} $ & $g$ & $ {\bf R}$ & $r$ & $g$  
\tabularnewline
\hline      
${\sharp 13}$    &    $ 11 $  &
$ {\bf R} $ & $ {\bf R} $ & $ {\bf G}$ & $ {\bf G}$ & $ {\bf Y}$ &
$r$  & $  {\bf Y} $ & $ {\bf Y} $ & $  {\bf Y}  $ & $ {\bf Y} $ &
$ {\bf G} $ & $g$ & $ {\bf R}$ & $r$ & $g$  
\tabularnewline
\hline      
${\sharp 14}$  &   $ 11 $  &
$ {\bf R} $ & $ {\bf R} $ & $ {\bf G}$ & $ {\bf G}$ & $r $&
${\bf Y} $  & ${\bf Y} $ & $ {\bf Y} $ & $  {\bf Y}  $ & $ {\bf Y} $ &
$ {\bf G} $ & $g$ & $ {\bf R}$ & $r$ & $g$
\tabularnewline
\hline      
${\sharp 15}$    &  $ 11 $  &
$ {\bf R} $ & $ {\bf G} $ & $ {\bf R}$ & $ {\bf Y}$ & $ {\bf Y}$&
${\bf G} $  & $g$ & $ {\bf R} $ & $  {\bf G}  $ & $ {\bf R} $ &
${y}$ & $g$ & $ {\bf R}$ & ${y}$ & $ {\bf Y}$ 
\tabularnewline
\hline  
 ${\sharp 16}$  &     $ 11 $  &
$ {\bf R} $ & $ {\bf R} $ & $ g$ & $ {\bf Y}$ & ${y}$&
${y}$  & ${y}$ & $ {\bf R} $ & $  {\bf G}  $ & $ {\bf G} $ &
${\bf R}$ & $ {\bf G}$ & $ {\bf R}$ & $ {\bf R}$ & $ {\bf G}$ 
\tabularnewline
\hline      
${\sharp 17}$  &    $ 11 $  &
$ {\bf R} $ & $ {\bf G} $ & $ {\bf R} $ & $ {\bf G}$ & $ {\bf Y} $&
$ {\bf Y} $  & $g$ & $ {\bf R} $ & $  {\bf G}  $ & $ {\bf R} $ &
${y}$ & $g$ & $ {\bf R}$ & ${y}$ & $ {\bf Y}$ 
\tabularnewline
\hline      
${\sharp 18}$ &     $ 11 $  &
$ {\bf R} $ & $ {\bf G} $ & $ {\bf G} $ & $ {\bf Y}$ & $ {\bf G} $&
$ {\bf R} $  & ${y}$ & $ {\bf R} $ & $  {\bf Y}  $ & $ {\bf G} $ &
$ {\bf Y}$ & $ {\bf G}$ & $ r$ & ${y}$ & $ r$ 
\tabularnewline
\hline      
${\sharp 19}$ &   $ 11 $  &
$ {\bf R} $ & $ {\bf G} $ & $ {\bf R} $ & $ {\bf Y}$ & $ {\bf G} $&
$ {\bf R} $  & ${y}$ & $ {\bf G} $ & $  {\bf R}  $ & $ {\bf Y} $ &
$r$ & $ {\bf Y}$ & $ {\bf Y}$ & $g$ & $g$ 
\tabularnewline
\hline      
${\sharp 20}$   &   $ 11 $  &
$ {\bf R} $ & $ {\bf R} $ & $ {\bf G} $ & $ {\bf Y}$ & $ {\bf G} $&
$ {\bf Y} $  & ${y}$ & $ {\bf G} $ & $  {\bf Y}  $ & $ {\bf R} $ &
${y}$ & $r$ & $ {\bf G}$ & $g$ & ${\bf R}$ 
\tabularnewline
\hline      
${\sharp 21}$   &  $ 11 $  &
$ {\bf R} $ & $ {\bf G} $ & $ {\bf G} $ & $ {\bf G}$ & $ {\bf Y} $&
$ {\bf R} $  & ${y}$ & $ {\bf R} $ & $  {\bf Y}  $ & $ {\bf G} $ &
$ {\bf G}$ & $ {\bf Y} $ & $r$ & ${y}$ & $r$ 
\tabularnewline
\hline      
${\sharp 22}$  &   $ 11 $  &
$ {\bf R} $ & $ {\bf R} $ & $ {\bf G} $ & $ {\bf Y}$ & $ {\bf G} $&
$ {\bf Y} $  & $r$ & $ {\bf Y} $ & $  {\bf Y}  $ & $ {\bf Y} $ &
$ {\bf G}$ & $g$ & ${\bf R}$ & $r$ & $g$ 	
\tabularnewline
\hline      
${\sharp 23}$   &  $ 13 $  &  
$ {\bf R} $ & $ {\bf G} $ & $r$ & $ {\bf R}$ & $ {\bf Y} $&
$ {\bf G} $  & ${\bf Y} $ & $ {y}$ & ${\bf R}  $ &
$ {\bf G}$ & $ {\bf G} $ & ${\bf Y}$ & $ {\bf R} $ & $  {\bf Y} $ 	& 
$ {\bf G} $
\tabularnewline
\hline      
${\sharp 24}$  &  $ 11 $  &  
$ {\bf R} $ & $ {\bf G} $ &  $ {\bf G}$ & $ {\bf Y} $&
$ {\bf G} $  & ${\bf Y} $ &  ${\bf R}$ &  ${\bf R}$ &
$g$ & $ {\bf G} $ & ${y}$ & ${y}$ & $  {\bf R}$&${\bf Y}$ & $r$
\tabularnewline
\hline      
${\sharp 25}$    &    $ 11 $  &   
$ {\bf R} $ & $ {\bf R} $ & $ {\bf G}$ & $ {\bf G} $&
$ {\bf Y} $  & ${\bf Y} $ &  ${\bf Y} $ &  ${\bf G}  $ &
$ {\bf Y}$ & $ r$ & ${y}$ & $ {\bf G} $ & $ r$ 	&   ${\bf R}$ & $g$
\tabularnewline
\hline      
${\sharp 26}$ &    $ 11 $  &   
$ {\bf R} $ & $ {\bf R} $ & $ {\bf G}$ & $ {\bf Y} $&
$ {\bf Y} $  & ${\bf Y} $ &  ${\bf G}$  & $r$ &  ${\bf Y}  $ &
$ {\bf G}$ &  $ {\bf Y} $ & $g$ &  $r$ & ${\bf R}$ & $g$
\tabularnewline
\hline      
${\sharp 27}$  &    $ 13 $  &   
$ {\bf R} $ & $ {\bf G} $ & $ {\bf G}$ & $ {\bf Y} $&
$ r$  & ${\bf Y} $ &  $ {\bf R} $ &  ${\bf G}  $ &
$ {\bf G}$ & ${y}$&   ${\bf R}$ & ${\bf Y}$ &${\bf R}$ & ${\bf Y}$ & $ {\bf G}$
\tabularnewline
\hline    
${\sharp 28}$    &    $ 11 $  &   
$ {\bf R} $ & $ {\bf G} $ & $ {\bf R}$ & $ {\bf Y} $&
 ${\bf Y} $ &  $ {\bf G} $ & ${y}$ &  ${\bf G}  $ &
$r$ &  ${\bf Y}$ & ${\bf G}$ &$r$ & ${\bf G}$ & $ {\bf R}$ & ${y}$
\tabularnewline
\hline      
${\sharp 29}$    &   $ 11 $  &   
$ {\bf R} $ & $ {\bf G} $ & $ {\bf R}$ & $ {\bf Y} $&
 ${\bf Y} $ &  $ {\bf G} $ &  ${y}$ &
$ {\bf G}$ & $r$&   ${\bf Y}$ & ${\bf G}$ &$r$ & ${\bf G}$ & $ {\bf R}$ & ${y}$
\tabularnewline
\hline      
${\sharp 30}$ &   $ 11$  &   
$ {\bf R} $ & $ {\bf G} $ & $ {\bf R}$ & $ {\bf G} $&
 ${\bf Y} $ &  ${\bf Y} $& $r$ &  ${\bf Y}  $ &
$ {\bf Y}$ &${\bf R}$ & $g$ &${\bf G}$ & ${\bf Y}$ & $r$ & $g$
\tabularnewline
\hline     
${\sharp 31}$   &  $ 11$  &  
$ {\bf R} $ & $ {\bf G} $ & $ {\bf R}$ & 	$ {\bf Y}$ & 	$ {\bf Y} $ &  
$ {\bf R} $ & 	$ {\bf G} $ & $ {\bf Y} $ & $g$ & 	$ {\bf G} $ &  	
$ r$ & ${y}$ & $ {\bf Y} $ &  $ {\bf R} $ & $g$ 	
\tabularnewline
\hline      
${\sharp 32}$  &     $ 11$  &  
$ {\bf R} $ & $ {\bf G} $ & $ {\bf Y} $ & $ {\bf Y} $ & $ {\bf Y} $ &
$ {\bf G} $ & $ {\bf R} $ & $ {\bf R} $ & ${y}$ &   $ {\bf R} $ & 
$g$ & $g$ & $ {\bf Y} $ & $r$ & $ {\bf G} $ 
\tabularnewline
\hline      
${\sharp 33}$ &   $ 13$  &  
$ {\bf R} $ & $ {\bf G} $ & $g$ & $ {\bf Y} $ &  $ {\bf Y} $ & $ {\bf R} $ & 
$ {\bf G} $ & $ {\bf G} $ & $ {\bf R} $ & $ {\bf Y} $ & $ {\bf Y} $ & 	
$ {\bf R} $ & 	$ {\bf G} $ &  $ {\bf R} $ &  ${y}$
\tabularnewline
\hline  
${\sharp 34}$    &   $ 11$  & 
${\bf R}$ & ${\bf G}$ & ${\bf R}$ & ${\bf G}$ 	& ${\bf R}$& ${\bf Y}$
& ${\bf Y}$ & ${\bf G}$ & ${\bf R}$ & $r$ & ${y}$ & ${y}$ &  ${\bf Y}$  
& ${\bf G}$ & $g$
\tabularnewline
\hline
${\sharp 35}$  &   $ 13$  & 
${\bf R}$ & ${\bf G}$	& ${\bf G}$ & ${\bf R}$	& ${\bf Y}$ & $r $ &  ${\bf Y}$ 	& ${\bf Y}$ & ${\bf G}$	& ${\bf Y}$	& ${\bf R}$	
& ${\bf G}$ & ${\bf G}$	& ${\bf R}$	& ${y}$ 	
\tabularnewline
\hline
${\sharp 36}$  &    $ 11$  & 
${\bf R}$ &${\bf G}$ & ${\bf R}$ & ${\bf G}$ & ${\bf R}$ & ${\bf Y}$ & ${\bf Y}$ & 	${\bf R}$ &  ${\bf G}$ & $r$ & ${y}$ & ${\bf Y}$ &  ${y}$ & $g$ & ${\bf G}$ 	
\tabularnewline
\hline
${\sharp 37}$  &  $ 11$  & 
${\bf R}$ & ${\bf G}$ & ${\bf G}$ & ${\bf G}$ &  ${\bf Y}$ &  ${\bf R}$ & ${\bf Y}$ & 	${\bf R}$ & ${y}$ & 	${\bf G}$ &  ${\bf G}$ & 	${\bf Y}$ & $r$ & ${y}$ & $r$
\tabularnewline
\hline
  ${\sharp 38}$  &   $ 11$  & 
${\bf R}$ & ${\bf R}$ & ${\bf G}$ & ${\bf Y}$ & ${\bf G}$ &  ${\bf Y}$ & ${\bf Y}$ & ${\bf G}$ & 	$r$	& ${\bf R}$ &$g$ & $r$ & ${\bf G}$ & ${y}$  & ${\bf Y}$   	
\tabularnewline
\hline
${\sharp 39}$ & $ 11$  & 
${\bf R}$ & ${\bf R}$ &  ${\bf G}$ & ${\bf G}$ & ${\bf R}$ & ${\bf R}$ & ${\bf G}$ & ${\bf Y}$ & ${\bf Y}$ & ${\bf Y}$ & ${y}$ & $r$ & $g$ & ${y}$& ${\bf G}$ 
\tabularnewline
\hline
${\sharp 40}$  &   $ 11$  &   
${\bf R}$ & ${\bf G}$ & ${\bf G}$ & ${\bf Y}$ & ${\bf G}$ & ${\bf Y}$ & ${\bf R}$ & 	${\bf  Y}$ & ${\bf R}$ & ${\bf R}$ & ${y}$ & ${y}$& $r$  & ${\bf G}$ & $g$
\tabularnewline
\hline    
\end{tabular}
\end{table}
\begin{table}[h!]
\caption{The  largest minimal defining set of ${\rm STS}(15)$}
\label{Ldefining sts15 NO2}
\begin{tabular}{|@{\hspace{3pt}}c@{\hspace{3pt}}|c@{\hspace{3pt}}||@{\hspace{3pt}}c@{\hspace{3pt}}|@{\hspace{3pt}}c@{\hspace{3pt}}|@{\hspace{3pt}}c@{\hspace{3pt}}|c@{\hspace{3pt}}|@{\hspace{3pt}}c@{\hspace{3pt}}|@{\hspace{3pt}}c@{\hspace{3pt}}|@{\hspace{3pt}}c@{\hspace{2pt}}|@{\hspace{2pt}}c@{\hspace{2pt}}|@{\hspace{2pt}}c@{\hspace{2pt}}|@{\hspace{2pt}}c@{\hspace{2pt}}|@{\hspace{2pt}}c@{\hspace{2pt}}|@{\hspace{2pt}}c@{\hspace{2pt}}|@{\hspace{2pt}}c@{\hspace{3pt}}|@{\hspace{3pt}}c@{\hspace{3pt}}|@{\hspace{3pt}}c@{\hspace{3pt}}|@{\hspace{3pt}}c@{\hspace{3pt}}|@{\hspace{3pt}}c@{\hspace{3pt}}|@{\hspace{3pt}}c@{\hspace{3pt}}}
\hline
 No: &   ${\mathcal{D}}({\rm STS}(15),3)$   &   
$0$ & $1$ & $2$ & $3$ & $4$ & $5$ & $6$ & $7$ & $8$ & $9$ & $10$ & $11$  & $12$ & $13$  & $14$
\tabularnewline
\hline   
${\sharp 41}$  &  $ 13$  &     
${\bf R}$ & ${\bf G}$ & ${\bf G}$ &${\bf Y}$ & ${\bf R}$ & ${\bf Y}$ & $r$ & ${\bf Y}$ & ${\bf G}$ & ${\bf Y}$ & 	${\bf R}$ &   ${y}$ & ${\bf R}$ & ${\bf G}$ & ${\bf G}$ 	
\tabularnewline
\hline
${\sharp 42}$  &  $ 13$  &    
${\bf R}$ & ${\bf G}$ & ${\bf R}$ & ${\bf R}$ & ${\bf G}$ & ${\bf R}$ & ${y}$ &   	${\bf G}$ & ${\bf R}$ & ${\bf Y}$ & ${\bf Y}$ &  $g$ & ${\bf G}$ & ${\bf Y}$ & 
${\bf Y}$   	
\tabularnewline
\hline
${\sharp 43}$  &  $ 11$  &   
${\bf R}$ & ${\bf G}$ & ${\bf R}$ & ${\bf R}$ & ${\bf G}$ & ${\bf Y}$ &  ${\bf Y}$  & ${\bf G}$ &  ${\bf R}$ &  ${y}$ & $r$ & ${\bf G}$ &  	
$g$ & ${y}$ &  ${\bf Y}$ 
\tabularnewline
\hline
${\sharp 44}$  &   $ 11$  &   
${\bf R}$ & ${\bf G}$ & ${\bf R}$ & ${\bf Y}$ & ${\bf Y}$ &  ${\bf G}$ &   ${\bf R}$ & 	${\bf Y}$ & ${y}$ &  ${\bf G}$ & 	$g$ & ${y}$ & $r$ &  	${\bf G}$ & ${\bf R}$  	
\tabularnewline
\hline
${\sharp 45}$  &    $ 11$  &   
${\bf R}$ &  ${\bf G}$ & ${\bf R}$ & ${\bf Y}$ & ${\bf Y}$ & 	${\bf G}$ &  ${\bf R}$ &  ${\bf Y}$ & ${y}$ &  ${\bf G}$ &  	${\bf Y}$ & $r$ & ${\bf G}$ &  $g$ & $r$
\tabularnewline
\hline
${\sharp 46}$  &     $ 13$  & 
${\bf R}$ & ${\bf G}$ & ${\bf R}$ &	${\bf R}$ &${\bf G}$ & ${\bf R}$ & ${y}$ & ${\bf G}$ & ${\bf R}$ & 	${\bf Y}$ &	${\bf Y}$ & $g$ & 	
${\bf G}$ & ${\bf Y}$ & ${\bf Y}$	
\tabularnewline
\hline
${\sharp 47}$   &    $ 13$  &  
${\bf R}$ & ${\bf R}$ &	${\bf G}$ &	${\bf G}$ & ${\bf Y}$ &${\bf R}$ & 	${\bf G}$ & 	${\bf Y}$ & ${y}$ & ${\bf Y}$ & ${\bf Y}$ &
${\bf R}$ &	${\bf G}$ &${\bf G}$ & $r$  	
\tabularnewline
\hline
${\sharp 48}$ &  $ 13$  &  
${\bf R}$ & ${\bf G}$ &${\bf G}$ &${\bf Y}$ & ${\bf G}$ & ${\bf Y}$ &${\bf R}$ & 	${\bf Y}$ &${\bf R}$ &	${\bf R}$ &	${\bf Y}$ &	
${y}$ &  ${\bf R}$ & 	$g$ & ${\bf G}$ 
 \tabularnewline
\hline
${\sharp 49}$  &  $ 13$  &  
${\bf R}$ &	${\bf R}$ & ${\bf G}$	& ${\bf G}$ & ${\bf Y}$	& ${\bf R}$ & ${\bf G}$  	& ${\bf Y}$  & ${\bf Y}$ & ${y} $  & ${\bf Y}$ & ${\bf R}$ & ${\bf G}$ & ${\bf G}$ 	
 & $r$  	
\tabularnewline
\hline
${\sharp 50}$ &   $ 13$  &  
${\bf R}$ & ${\bf G}$ & ${\bf R}$ & ${\bf G}$ 	& ${\bf Y}$ & ${\bf Y}$ & ${\bf G}$ & 	${\bf R}$ &  ${\bf G}$ & ${\bf Y}$ &  ${\bf Y}$ &  ${\bf G}$ &  	
${\bf R}$ &   ${y}$ &   	$r$  	
\tabularnewline
\hline 
${\sharp 51}$  &  $ 13$  &  
${\bf R}$ & ${\bf G}$ & ${\bf R}$ & ${\bf Y}$ & ${\bf Y}$ & ${\bf G}$& 
${\bf R}$ & ${\bf Y}$ & ${\bf Y}$ & ${\bf G}$ & $g$ &   ${\bf R}$		 	
 & ${y}$ &   ${\bf R}$ & ${\bf G}$		
\tabularnewline
\hline
${\sharp 52}$  &  $ 11$  &     
${\bf R}$ & ${\bf G}$ & ${\bf R}$ & ${\bf G}$ &	${\bf R}$ & ${\bf Y}$ & ${\bf Y}$ & ${\bf Y}$ &${\bf Y}$ & $g$ & $g$ & $r$ & ${\bf G}$ & 	${\bf R}$ & ${y}$  	
\tabularnewline
\hline
${\sharp 53}$  & $ 11$  &  
${\bf R}$ &${\bf R}$ & 	${\bf G}$ &	${\bf G}$ & ${\bf Y}$ &	 ${\bf R}$ &	${\bf G}$ & ${\bf Y}$ &  ${y}$ &  ${\bf Y}$ & 	${y} $ & $r$ &  ${\bf G}$ & 	${\bf G}$ &		$r$ 	
\tabularnewline
\hline
${\sharp 54}$  &    $ 13$  &  
${\bf R}$ & ${\bf G}$ &	$r$ & ${\bf G}$ &  ${\bf Y}$ & ${\bf R}$ &	${\bf Y}$ &		${\bf G}$ & ${\bf G}$ &	${\bf R}$ & ${y}$ & ${\bf Y}$ &	
${\bf G}$ &	${\bf Y}$ &  ${\bf R}$  	
\tabularnewline
\hline
${\sharp 55}$  &  $ 13$  &     
${\bf R}$ &${\bf R}$ & 	${\bf G}$ &${\bf G}$ &	${\bf Y}$ &  ${\bf R}$ & 	${\bf G}$ &	${\bf Y}$ &	${\bf Y}$ &	${\bf Y}$ & ${y}$ & ${\bf R}$ & 	 ${\bf G}$ &	
 ${\bf G}$	& $r$
\tabularnewline
\hline
${\sharp 56}$  &   $ 13 $  &
${\bf R}$ &  ${\bf G}$ & ${\bf G}$ &${\bf Y}$ &	${\bf R}$ &	${\bf Y}$ & ${\bf G}$ & 	${y}$ & ${\bf R}$ & ${\bf G}$ &	${\bf G}$ &	${\bf Y}$ &
$r$ &  	${\bf Y}$ & ${\bf R}$ 
\tabularnewline
\hline
${\sharp 57}$  &  $ 13$  &
${\bf R}$ & ${\bf R}$ &${\bf G}$ &	${\bf G}$ &  ${\bf Y}$ & ${\bf R}$ & ${\bf G}$ &  	${\bf Y}$ &${\bf Y}$ &	${\bf Y}$ &	${y}$ & ${\bf R}$ &	
${\bf G}$ &	${\bf G}$ & $r$ 	
\tabularnewline
\hline
${\sharp 58}$  &  $ 13$  &
${\bf R}$ &  ${\bf G}$ &${\bf G}$ &	${\bf Y}$ &	${\bf R}$ & ${\bf Y}$ &	${\bf G}$ & 	${y}$ &  ${\bf R}$ & 	${\bf G}$ & ${\bf G}$ &
${\bf Y}$ &  $r$ & ${\bf Y}$ &	${\bf R}$ 
\tabularnewline
\hline
${\sharp 59}$  &     $ 11$  &
${\bf R}$ &${\bf G}$ &${\bf G}$ & ${\bf Y}$ & ${\bf R}$ &${\bf Y}$ & ${\bf G}$ & 	${\bf R}$ & $g$ &  	${\bf G}$ & $r$ & 	${y}$ & ${y}$& ${\bf R}$ &
${\bf Y}$ 
\tabularnewline
\hline
${\sharp 60}$  &     $ 13$  &
${\bf R}$ &  ${\bf G}$ & ${\bf R}$ & ${\bf G}$ & ${\bf R}$ &	${y}$ & ${\bf R}$ & 	${\bf Y}$ & ${\bf Y}$ & 	${\bf G}$ & 	$g$ & ${\bf Y}$ & 	
${\bf Y}$ &  ${\bf R}$ & 	${\bf G}$ 
\tabularnewline
\hline
${\sharp 61}$  &   $ 11$  &	
${\bf R}$ &  ${\bf G}$ & ${\bf Y}$ &  ${\bf R}$ &
 ${\bf Y}$ &  ${\bf G}$ &	$g$ &   ${\bf R}$ &  ${\bf Y}$ &	${\bf Y}$ &  
$r$ &${\bf Y}$ &  ${\bf G}$ & $g$ & $r$   	 	
\tabularnewline
\hline
${\sharp 62}$   &     $ 11$  &	
${\bf R}$ &  ${\bf G}$ & ${\bf G}$ &  ${\bf R}$ &	${\bf Y}$ &  ${\bf G}$ & 		$g$ &  ${\bf Y}$ &  ${\bf G}$ &  	${\bf Y}$ &  $r$ & ${y}$ & 	${\bf R}$ & $r$ & ${\bf Y}$
\tabularnewline
\hline
${\sharp 63}$ &     $ 13$  &	
${\bf R}$ &  ${\bf G}$ & ${\bf Y}$ &  ${\bf G}$ &
${\bf G}$ &  ${\bf R}$ &  	${y}$ & ${\bf Y}$ &  ${\bf R}$ & 	
${\bf Y}$ &  ${\bf R}$ & ${\bf G}$ &  ${\bf G}$ &
 ${\bf Y}$ &   $r$ 
\tabularnewline
\hline	
${\sharp 64}$   &     $ 11$  & 
${\bf R}$ &  ${\bf G}$ & ${\bf Y}$ & ${\bf R}$ &  ${\bf G}$ & ${\bf Y}$ &
${\bf Y}$ &  ${\bf G}$ & ${\bf R}$ &  ${\bf Y}$ & 	${y}$ & $r$ & $g$ 
 & ${\bf G}$ &  $r$ 	
\tabularnewline
\hline	
${\sharp 65}$   &   $ 13$  & 
${\bf R}$ &  ${\bf R}$ & ${\bf G}$ &  ${\bf G}$ &  ${\bf R}$ & ${\bf R}$ &	${\bf G}$ & ${y}$ &  ${\bf R}$ &  ${\bf Y}$ & ${\bf Y}$ &  $g$ & 
${\bf G}$ &  ${\bf Y}$ & ${\bf Y}$ 	
\tabularnewline
\hline	
${\sharp 66}$   &   $ 13$  & 
${\bf R}$ &  ${\bf G}$ & $g$ & ${\bf G}$ &  ${\bf R}$ & ${y}$ & ${\bf R}$ &  ${\bf Y}$ & ${\bf Y}$ & ${\bf G}$ &  ${\bf R}$ & ${\bf R}$ &
${\bf G}$ &  ${\bf Y}$ & ${\bf Y}$  	
\tabularnewline
\hline	
${\sharp 67}$  &   $ 13$  & 
${\bf R}$ &  ${\bf G}$ & ${\bf G}$ & ${\bf R}$ &  ${\bf Y}$ & ${\bf G}$ &
${\bf Y}$ &  ${\bf Y}$ & ${\bf R}$ & 	${\bf Y}$ &  ${\bf R}$ & ${\bf Y}$ & 
$r$ & $g$ &  ${\bf G}$ 	
\tabularnewline
\hline	
${\sharp 68}$  &  $ 13$  & 
${\bf R}$ &  ${\bf R}$ & ${\bf G}$ & ${\bf R}$ &  ${\bf G}$ & ${\bf G}$ &
 ${\bf R}$ &  ${\bf Y}$ & ${\bf Y}$ &	 ${\bf Y}$ & 	${y}$ & 
${\bf Y}$ &  ${\bf G}$ & ${\bf R}$ & $g$ 
\tabularnewline
\hline	
${\sharp 69}$ &   $ 13$  & 
${\bf R}$ &  ${\bf R}$ & ${\bf G}$ &  ${\bf R}$ &  ${\bf G}$ & ${\bf G}$ &
${\bf R}$ &  ${\bf Y}$ & ${\bf Y}$ & ${\bf Y}$ &  ${\bf Y}$ &  
${y}$ & ${\bf R}$ &  ${\bf G}$  &  $g$  	 	
 \tabularnewline
\hline	
${\sharp 70}$  &  $ 13$  & 
${\bf R}$ &  ${\bf G}$ & ${\bf Y}$ &
${\bf Y}$ &  ${\bf Y}$ & ${\bf R}$ &
${\bf G}$ &  ${\bf R}$ & ${\bf G}$ & 	${\bf R}$ & ${\bf G}$ &  ${\bf Y}$ & 
${y}$ & $r$ & ${\bf G}$  	
\tabularnewline
\hline	
${\sharp 71}$ & $ 13$  & 
${\bf R}$ &  ${\bf G}$ & ${\bf G}$ &
${\bf G}$ &  ${\bf G}$ & ${y}$ &   ${\bf R}$ & $r$ &   ${\bf Y}$ &  ${\bf R}$ & ${\bf Y}$ & 	${\bf Y}$ &  ${\bf R}$ & ${\bf Y}$ &   ${\bf G}$	
\tabularnewline
\hline	
${\sharp 72}$ & $ 13$  & 
${\bf R}$ &  ${\bf G}$ & ${\bf Y}$ & ${\bf Y}$ &  ${\bf Y}$ & ${\bf G}$ &
${\bf R}$ &  ${\bf R}$ & ${\bf G}$ &  ${\bf R}$ &  ${\bf G}$ & ${\bf Y}$ &  	
 ${\bf G}$ &  $r$ & ${y}$  		
\tabularnewline
\hline	
${\sharp 73}$   &  $ 11$  &  
${\bf R}$ &  ${\bf G}$ & ${\bf R}$ &
${\bf Y}$ &  ${\bf Y}$ & ${\bf G}$ &
${\bf R}$ &  ${\bf Y}$ & ${\bf Y}$ &  	${\bf R}$ &  ${\bf G}$  &  	
 $g$ & $g$ & ${y}$ & $r$ 	  	
\tabularnewline
\hline	
${\sharp 74}$   &$ 15$  &    
${\bf R}$ &  ${\bf G}$ & ${\bf R}$ &
${\bf G}$ &  ${\bf R}$ & ${\bf Y}$ &
${\bf Y}$ &  ${\bf R}$ & ${\bf G}$ & 	${\bf R}$ &  ${\bf G}$ & ${\bf Y}$ &  	
${\bf Y}$ &  ${\bf Y}$ & ${\bf G}$   		
\tabularnewline
\hline	
${\sharp 75}$   &   $ 13$  & 
${\bf R}$ &  ${\bf G}$ & ${\bf R}$ &
${\bf G}$ &  ${\bf R}$ & ${\bf Y}$ &  	${\bf Y}$ &  ${\bf R}$   & $g$ &   		  	
${\bf R}$ &  ${\bf G}$ & ${\bf Y}$ & ${y}$  &  ${\bf Y}$ & ${\bf G}$ 		
\tabularnewline
\hline	
${\sharp 76}$   &  $ 11$  & 
${\bf R}$ &  ${\bf R}$ & ${\bf G}$ &
${\bf R}$ &  ${\bf G}$ & ${\bf G}$ &  
${\bf Y}$ &  ${\bf R}$ & 	$g$ & 
${\bf G}$ &  ${\bf Y}$ & 	$r$ & ${y}$ & ${y}$ &  ${\bf Y}$ 
\tabularnewline
\hline	
${\sharp 77}$ &  $ 15$  &   
${\bf R}$ &  ${\bf G}$ & ${\bf R}$ &
${\bf Y}$ &  ${\bf Y}$ & ${\bf G}$ &
 ${\bf R}$ &  ${\bf G}$ & ${\bf R}$ & 	
${\bf Y}$ &  ${\bf G}$ & ${\bf R}$ & 
${\bf G}$ &  ${\bf Y}$ & ${\bf Y}$ 	
\tabularnewline
\hline	
${\sharp 78}$    &  $ 11$  & 
${\bf R}$ &  ${\bf G}$ & ${\bf Y}$ &
${\bf G}$ &  ${\bf R}$ & ${\bf Y}$ &
${\bf Y}$ &  ${\bf G}$ & $r$ &   
${\bf Y}$ &  ${\bf Y}$ & $g$ & 
$r$ & $g$ & 	${\bf R}$ 
\tabularnewline
\hline	
${\sharp 79}$   &  $ 11$  & 
${\bf R}$ &  ${\bf G}$ & ${\bf Y}$ &
${\bf Y}$ &  ${\bf Y}$ & ${\bf G}$ & 	
${\bf R}$ &  ${\bf R}$ & ${\bf G}$ &  	
${\bf R}$ &	$g$ & ${y}$ & ${\bf Y}$ & $r$ &	$g$
\tabularnewline
\hline	
${\sharp 80}$ &  $ 15$  & 
${\bf R}$ &  ${\bf G}$ & ${\bf Y}$ & 
${\bf Y}$ &  ${\bf R}$ & ${\bf Y}$ &  	
${\bf R}$ &  ${\bf G}$ & ${\bf G}$ &  		
${\bf G}$ &  ${\bf G}$ & ${\bf R}$ &
${\bf Y}$ &  ${\bf R}$ & ${\bf Y}$   	
\tabularnewline
\hline	
\end{tabular}
\end{table}
\newpage

\begin{thebibliography}{10}

\bibitem{MR0384579}
Claude Berge.
\newblock {\em Graphs and hypergraphs}.
\newblock North-Holland Publishing Co., Amsterdam, revised edition, 1976.
\newblock Translated from the French by Edward Minieka, North-Holland
  Mathematical Library, Vol. 6.

\bibitem{MR2368647}
J.~A. Bondy and U.~S.~R. Murty.
\newblock {\em Graph theory}, volume 244 of {\em Graduate Texts in
  Mathematics}.
\newblock Springer, New York, 2008.

\bibitem{MR2246267}
Charles~J. Colbourn and Jeffrey~H. Dinitz, editors.
\newblock {\em Handbook of combinatorial designs}.
\newblock Discrete Mathematics and its Applications (Boca Raton). Chapman \&
  Hall/CRC, Boca Raton, FL, {S}econd edition, 2007.

\bibitem{MR2011736}
Diane Donovan, E.~S. Mahmoodian, Colin Ramsay, and Anne~Penfold Street.
\newblock Defining sets in combinatorics: a survey.
\newblock In {\em Surveys in combinatorics, 2003 ({B}angor)}, volume 307 of
  {\em London Math. Soc. Lecture Note Ser.}, pages 115--174. Cambridge Univ.
  Press, Cambridge, 2003.

\bibitem{MR1953280}
A.~D. Forbes.
\newblock Uniquely 3-colourable {S}teiner triple systems.
\newblock {\em J. Combin. Theory Ser. A}, 101(1):49--68, 2003.

\bibitem{MR1961750}
A.~D. Forbes, M.~J. Grannell, and T.~S. Griggs.
\newblock On colourings of {S}teiner triple systems.
\newblock {\em Discrete Math.}, 261(1-3):255--276, 2003.
\newblock Papers on the occasion of the 65th birthday of Alex Rosa.

\bibitem{MR2469212}
C.~C. Lindner and C.~A. Rodger.
\newblock {\em Design theory}.
\newblock Discrete Mathematics and its Applications (Boca Raton). CRC Press,
  Boca Raton, FL, {S}econd edition, 2009.

\bibitem{MR1492638}
E.~S. Mahmoodian.
\newblock Some problems in graph colorings.
\newblock In {\em Proceedings of the 26th {A}nnual {I}ranian {M}athematics
  {C}onference, {V}ol. 2 ({K}erman, 1995)}, pages 215--218. Shahid Bahonar
  Univ. Kerman, Kerman, 1995.

\bibitem{MR1674887}
E.~S. Mahmoodian and E.~Mendelsohn.
\newblock On defining numbers of vertex colouring of regular graphs.
\newblock {\em Discrete Math.}, 197/198:543--554, 1999.
\newblock 16th British Combinatorial Conference (London, 1997).

\bibitem{MR1446764}
E.~S. Mahmoodian, Reza Naserasr, and Manouchehr Zaker.
\newblock Defining sets in vertex colorings of graphs and {L}atin rectangles.
\newblock {\em Discrete Math.}, 167/168:451--460, 1997.
\newblock 15th British Combinatorial Conference (Stirling, 1995).

\bibitem{MR2194763}
E.~S. Mahmoodian, Behnaz Omoomi, and Nasrin Soltankhah.
\newblock Smallest defining number of {$r$}-regular {$k$}-chromatic graphs:
  {$r\neq k$}.
\newblock {\em Ars Combin.}, 78:211--223, 2006.

\bibitem{MR0280390}
Alexander Rosa.
\newblock On the chromatic number of {S}teiner triple systems.
\newblock In {\em Combinatorial {S}tructures and their {A}pplications ({P}roc.
  {C}algary {I}nternat. {C}onf., {C}algary, {A}lta., 1969)}, pages 369--371.
  Gordon and Breach, New York, 1970.

\bibitem{MR0290993}
Alexander Rosa.
\newblock Steiner triple systems and their chromatic number.
\newblock {\em Acta Fac. Rerum Natur. Univ. Comenian. Math.}, (Publ.
  24):159--174, 1970.

\bibitem{MR1178507}
Alexander Rosa and Charles~J. Colbourn.
\newblock Colorings of block designs.
\newblock In {\em Contemporary design theory}, Wiley-Intersci. Ser. Discrete
  Math. Optim., pages 401--430. Wiley, New York, 1992.

\bibitem{MR1871828}
J.~H. van Lint and R.~M. Wilson.
\newblock {\em A course in combinatorics}.
\newblock Cambridge University Press, Cambridge, {S}econd edition, 2001.

\end{thebibliography}

\end{document}